\documentclass[11pt]{article}
\usepackage[a4paper,margin=28mm]{geometry}
\usepackage[T1]{fontenc}
\usepackage{lmodern,amsmath,amssymb,amsthm,mathtools,bm,microtype}
\usepackage[numbers,sort&compress]{natbib}
\usepackage[colorlinks=true,linkcolor=blue!50!black,citecolor=blue!50!black,urlcolor=blue!50!black]{hyperref}
\usepackage{xcolor,enumitem,needspace,booktabs}
\hypersetup{pdftitle={Global Sobolev convergence of Howard iteration for parabolic Bellman equations with controlled diffusion},pdfauthor={Lihua Bai and Linyu Miao}}
\numberwithin{equation}{section}
\newtheorem{theorem}{Theorem}[section]
\newtheorem{lemma}[theorem]{Lemma}
\newtheorem{proposition}[theorem]{Proposition}
\newtheorem{corollary}[theorem]{Corollary}
\theoremstyle{definition}
\newtheorem{assumption}[theorem]{Assumption}
\newtheorem{example}[theorem]{Example}
\newtheorem{remark}[theorem]{Remark}
\newcommand{\R}{\mathbb R}
\newcommand{\E}{\mathbb E}
\newcommand{\KL}{\operatorname{KL}}
\newcommand{\TV}{\operatorname{TV}}

\newcommand{\norm}[1]{\left\lVert#1\right\rVert}

\newcommand{\dd}{\,\mathrm d}
\title{Global Sobolev convergence of Howard iteration\protect\\for parabolic Bellman equations\protect\\with controlled diffusion}
\author{Lihua Bai and Linyu Miao\thanks{Corresponding author.}\\[0.4em]
\small School of Mathematical Sciences, Nankai University,\\
\small Tianjin 300071, China\\[0.2em]
\small E-mail: \texttt{lhbai@nankai.edu.cn} (Lihua Bai),\\
\small \texttt{mly202210@126.com} (Linyu Miao)}
\date{}
\begin{document}
\maketitle
\begin{abstract}
We prove global Sobolev convergence of Howard policy iteration for finite-horizon Bellman equations with control-dependent diffusion. In one dimension, the result holds for bounded measurable coefficients under uniform ellipticity, without a large discount, a short horizon, a small diffusion perturbation, or regularity assumptions on the improving policies. We also obtain multidimensional extensions under structural conditions on the diffusion matrix. The key observation is that vanishing policy improvements force the Bellman residual to converge in measure; higher integrability then yields strong convergence and constructs the unique strong solution. We extend the argument to entropy-regularized models, including zero-temperature limits, and establish quadratic convergence for special one-dimensional models.
\end{abstract}
\noindent\textbf{Keywords.} Howard iteration; controlled diffusion; parabolic Hamilton--Jacobi--Bellman equation; strong solution; entropy regularization.
\par\noindent\textbf{MSC 2020.} 49M15, 49L20, 35K55.

\section{Introduction}
Howard's policy iteration \cite{Howard1960} alternates two steps: evaluate the current policy by solving a linear equation, then improve the policy by pointwise optimization. For diffusion control, policy evaluation is a linear second-order PDE, and the limiting equation is the Hamilton--Jacobi--Bellman equation. The connection with Newton's method was developed by Puterman and Brumelle \cite{PutermanBrumelle1979} and Puterman \cite{Puterman1981}. Convergence is well understood for many discrete Bellman systems; see, for example, Bokanowski, Maroso, and Zidani \cite{BokanowskiMarosoZidani2009}. In continuous time and space, controlling the diffusion raises a further difficulty: the improved policy depends on the Hessian, so smooth data can produce discontinuous coefficients in the next evaluation.

Existing continuous-time results distinguish drift control from diffusion control. For finite-horizon problems with uncontrolled diffusion, Kerimkulov, \v Si\v ska, and Szpruch \cite{Kerimkulov2020} prove exponential convergence and stability. Entropy-regularized results include Huang, Wang, and Zhou \cite{HuangWangZhou2025} for uncontrolled diffusion and Ma, Wang, and Zhang \cite{MaWangZhang2026} for finite-horizon drift control with super-exponential rates; the latter also treat scalar diffusion control at infinite horizon. The controlled-diffusion results of Tran, Wang, and Zhang \cite{TranWangZhang2025} concern stationary equations and use a large discount, small covariance perturbations, and uniform classical estimates; a finite-horizon extension is also discussed. Jacka and Mijatovi\'c \cite{JackaMijatovic2017} include finite-horizon controlled diffusions in a general framework with uniform local Lipschitz conditions on improving selectors. Volatility control is also covered by modified updates \cite{Kerimkulov2021,PossamaiTangpi2024}. To our knowledge, global strong convergence of the original finite-horizon Howard iteration under bounded measurable coefficients, without diffusion smallness or regularity assumptions on the improving policies, has remained open even in one space dimension. This is the problem addressed here.

We prove convergence in the full parabolic Sobolev space $W^{2,1}_2$ on a smooth bounded domain, for every finite horizon and every smooth initial guess with the prescribed traces. In one dimension, the leading coefficient may be any bounded uniformly elliptic controlled coefficient; no large discount or small diffusion variation is required. Higher-dimensional results cover matrices proportional to a common smooth matrix field and small anisotropic perturbations of scalar matrices. The iteration constructs the unique strong solution. We then extend the convergence argument to relative-entropy regularization, including strong zero-temperature limits, and prove quadratic rates under additional one-dimensional assumptions. Strong-solution theory for related fully nonlinear equations is already available under Cordes conditions \cite{SmearsSuli2014,SmearsSuli2016}; the main contribution here is convergence of the Howard sequence itself.

The key observation concerns convergence in measure. Policy improvement makes the values monotone, so successive improvements vanish. If the nonnegative Bellman residual nevertheless stayed positive on a set of fixed positive measure, a parabolic growth estimate \cite{Lin2015} would force a nonvanishing improvement, a contradiction. A uniform bound at an exponent above two upgrades convergence in measure to strong $L^2$ convergence. Linear stability then gives convergence of the time derivatives and Hessians as well as the values. This avoids assuming classical regularity or compactness of the policies.

Section~\ref{sec:setting} states the unregularized problem and main theorem. Sections~\ref{sec:linear} and \ref{sec:convergence} give the linear estimates and convergence proof. Entropy regularization is treated separately in Section~\ref{sec:entropy}. Sections~\ref{sec:quadratic} and \ref{sec:examples} present quadratic rates and examples. Section~\ref{sec:scope} records the scope and further questions.

\section{The Bellman equation and Howard iteration}\label{sec:setting}
Let $T>0$ be a fixed maturity and let $\Omega\subset\R^d$ be a bounded connected domain with smooth boundary. Put $Q=(0,T)\times\Omega$. We use remaining time $\tau=T-t$; all coefficients below have already undergone the same time reversal. Set
\[
 X_p=W^{1,p}(0,T;L^p(\Omega))\cap L^p(0,T;W^{2,p}(\Omega)).
\]
This is the usual parabolic space $W^{2,1}_p(Q)$, with spatial order two and time order one. We use the equivalent norm
\[
 \norm{w}_{X_p}=\norm{w}_p+\norm{Dw}_p+\norm{D^2w}_p+\norm{w_\tau}_p,
\]
where $\norm{\cdot}_p$ means the $L^p(Q)$ norm unless another domain is specified. Spatial derivatives are denoted by $D,D^2$. Matrix norms are Frobenius norms, and coefficient bounds are essential bounds in time and state, uniform in the action. Define
\[
 X_p^0=\{w\in X_p:w\in L^p(0,T;W^{1,p}_0(\Omega)),\ w(0)=0\}.
\]
The initial trace is taken in $L^p(\Omega)$; the standard stronger parabolic trace is then also zero. We write $\partial_pQ$ for the initial and lateral boundary. Fix $g\in C^\infty(\overline\Omega)$ with $g=0$ on $\partial\Omega$, set $\widetilde g(\tau,x)=g(x)$, and put $\mathcal X_p=\widetilde g+X_p^0$. A strong solution is a function in this affine space satisfying the equation almost everywhere.

\begin{assumption}[Coefficients and diffusion structure]\label{ass:coeff}
The action space $U$ is compact metrizable. The functions $A^\alpha,b^\alpha,c^\alpha,f^\alpha$ are measurable in $(\tau,x)$ and continuous in $\alpha$, with uniform bounds. The matrices $A^\alpha$ are symmetric and $0\le c^\alpha\le C_0$. In a diffusion control model, $A^\alpha$ is one half of the covariance matrix, that is, $A^\alpha=\frac12\sigma^\alpha(\sigma^\alpha)^\top$. One of the following structural hypotheses holds.
\begin{enumerate}[label=(\roman*),leftmargin=*]
\item\label{case:shape} There is a smooth, control-independent symmetric matrix field $S$ on $\overline Q$, with $s_0I\le S\le s_1I$, such that
\begin{equation}\label{eq:shape}
 A^\alpha=a^\alpha S,\qquad 0<\underline a\le a^\alpha\le\overline a<\infty.
\end{equation}
\item\label{case:perturb} The domain is convex and
\begin{equation}\label{eq:perturb}
 A^\alpha=a^\alpha I+E^\alpha,\qquad
 \underline a\le a^\alpha\le\overline a,\qquad
 \sup_{\tau,x,\alpha}|E^\alpha|_F\le e_0<\underline a.
\end{equation}
Here $E^\alpha$ is symmetric and $|\cdot|_F$ is the Frobenius norm.
\end{enumerate}
\end{assumption}

For $d=1$ and $S=1$, condition \eqref{eq:shape} imposes only the stated ellipticity bounds on the leading coefficient. The remaining boundedness, control-space, and domain hypotheses are still in force. In higher dimensions it requires the matrices to be proportional to a common matrix field. Condition \eqref{eq:perturb} allows a small controlled non-scalar perturbation. All policies, mixtures, and approximations in \eqref{eq:shape} use the same fixed field $S$ and common structural bounds. Both classes are preserved by measurable pointwise mixtures and convex combinations; the auxiliary coefficients need not correspond to a greedy policy.

For $z=(r,p,M)\in\R\times\R^d\times\mathbb S^d$ define
\begin{align}
 h(\tau,x,z,\alpha)&=A^\alpha:M+b^\alpha\cdot p-c^\alpha r+f^\alpha,\label{eq:h}\\
 H(\tau,x,z)&=\max_{\alpha\in U}h(\tau,x,z,\alpha).\label{eq:hzero}
\end{align}
We often suppress $(\tau,x)$ and write $Ju=(u,Du,D^2u)$. The Bellman equation is
\begin{equation}\label{eq:HJB}
 F(u):=u_\tau-H(Ju)=0\quad\hbox{a.e. in }Q,\qquad u\in\mathcal X_2.
\end{equation}
The initial condition at $\tau=0$ is the terminal reward at maturity $T$. The homogeneous lateral condition corresponds to stopping at exit with zero exit reward.

For a measurable control $\alpha=\alpha(\tau,x)$, write
\[
 \ell^\alpha w=A^\alpha:D^2w+b^\alpha\cdot Dw-c^\alpha w,
 \qquad L^\alpha=\partial_\tau-\ell^\alpha.
\]
Choose $u^0\in C^\infty(\overline Q)$ with the prescribed initial and lateral traces; one choice is $u^0=\widetilde g$. Given $u^n$, select a measurable maximizer
\[
 \alpha_n(\tau,x)\in\operatorname*{arg\,max}_{\alpha\in U}
 h(\tau,x,Ju^n(\tau,x),\alpha)
\]
and evaluate it by solving
\begin{equation}\label{eq:Howard}
 L^{\alpha_n}u^{n+1}=f^{\alpha_n},\qquad u^{n+1}\in\mathcal X_{p_*}.
\end{equation}
The exponent $p_*>2$ is constructed below independently of the policy. No subsolution condition is imposed on $u^0$. Optimization uses the original Hamiltonian, without damping, feedback smoothing, or control-dependent weights.

\Needspace{9\baselineskip}
\begin{theorem}[Global strong convergence of Howard iteration]\label{thm:main}
Under Assumption~\ref{ass:coeff}, there exists $p_*>2$ with the following properties.
\begin{enumerate}[label=(\roman*),leftmargin=*]
\item Every step \eqref{eq:Howard} is uniquely solvable. Starting with the first evaluation,
\[
 u^1\le u^2\le\cdots,\qquad
 \sup_{n\ge0}\norm{u^n}_{X_{p_*}}+\sup_{n\ge1}\norm{u^n}_{L^\infty(Q)}<\infty.
\]
\item Equation \eqref{eq:HJB} has a unique solution $u$ in $\mathcal X_2$. It belongs to $X_{p_*}\cap L^\infty(Q)$, and
\begin{equation}\label{eq:strongconv}
 \norm{u^n-u}_{X_2}\longrightarrow0.
\end{equation}
In particular the time derivatives, gradients, and Hessians converge strongly in $L^2(Q)$.
\item With $R_n=H(Ju^n)-u^n_\tau$, for $n\ge1$,
\begin{equation}\label{eq:posterior}
 R_n\ge0,\qquad R_n\longrightarrow0\text{ in }L^2(Q),\qquad
 \norm{u^n-u}_{X_2}\le C\norm{R_n}_{L^2(Q)}.
\end{equation}
\end{enumerate}
The constants may depend on the data and the smooth initial guess, but are independent of the sequence of maximizing selections. No short-horizon or positive lower bound on the discount is required. In \eqref{eq:shape}, no smallness condition is imposed on $\overline a/\underline a$. Global convergence here means convergence from every such initial guess, without a neighborhood-of-solution assumption; all spatial norms are on the bounded cylinder $Q$.
\end{theorem}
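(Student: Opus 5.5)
The proof will be organized around a single linear estimate that holds uniformly over all admissible coefficients, followed by a monotonicity argument and a measure-theoretic compactness step.

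\textbf{Uniform linear estimate.} Let $\mathcal A$ be the class of measurable coefficient fields allowed by Assumption~\ref{ass:coeff}. Under case \ref{case:shape} or \ref{case:perturb}, I first show that there are $p_*>2$ and $C$ such that every $L^\alpha$ with $\alpha\in\mathcal A$ is an isomorphism $X^0_p\to L^p(Q)$ for all $p\in[2,p_*]$. The inverse bound $\norm{w}_{X_p}\le C\norm{L^\alpha w}_p$ should hold with $C$ independent of $\alpha$.

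For $p=2$ I would use a Cordes-type argument. In case \ref{case:shape} divide the equation by $a^\alpha$. Then $\gamma L^\alpha$ becomes a perturbation of $\gamma\partial_\tau-S:D^2$, and the weight $\gamma=1/a^\alpha$ is allowed to be discontinuous. On the convex domain of case \ref{case:perturb} one uses instead the Miranda--Talenti inequality $\norm{D^2w}_2\le\norm{\Delta w}_2$ together with the smallness $e_0<\underline a$.

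The time-derivative term is the delicate point. I would test with $-\Delta w$, or with the smooth operator $S:D^2 w$ after a change of variables, rather than with $\gamma w_\tau$. This gives $\frac{d}{d\tau}\tfrac12\norm{Dw}^2$ plus a coercive term $\underline a\norm{D^2w}^2$, while the lower-order terms are absorbed by Gronwall on $[0,T]$. No short horizon is needed for this.

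The step from $p=2$ to some $p_*>2$ follows by Šneĭberg's stability of invertibility in the complex interpolation scale $X_p^0\to L^p$. The interpolation constants are uniform because the estimate is uniform over $\mathcal A$. Finally, the maximum principle (Krylov--Safonov/ABP type) gives a uniform $L^\infty$ bound for $n\ge1$, since $f$ and $g$ are bounded.

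\textbf{Monotonicity and uniform bounds.} Since $u^n$ solves $L^{\alpha_{n-1}}u^n=f^{\alpha_{n-1}}$, we have $R_n=h(Ju^n,\alpha_n)-h(Ju^n,\alpha_{n-1})\ge0$. Set $v=u^{n+1}-u^n$. Then $L^{\alpha_n}v=R_n\ge0$ with zero parabolic data, so $v\ge0$ by the maximum principle. This gives monotonicity and, with the uniform estimate, the bound in (i). The pointwise limit $u=\lim u^n$ exists and is bounded. The sequence is bounded in $X_{p_*}$, so $u^n\rightharpoonup u$ weakly in $X_{p_*}$ and $u\in\mathcal X_{p_*}$. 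Since $\sum_n\int(u^{n+1}-u^n)$ is finite, $\norm{u^{n+1}-u^n}_{L^1}\to0$.

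\textbf{Key step: $R_n\to0$ in measure.} This is the main obstacle. Suppose instead that $|\{R_n>\varepsilon\}|\ge\delta$ along a subsequence. The difference $v_n=u^{n+1}-u^n$ is a nonnegative supersolution of a uniformly parabolic equation with bounded measurable coefficients, and its right-hand side is at least $\varepsilon$ on a set of measure $\delta$.

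I would invoke the parabolic growth lemma \cite{Lin2015}, in its ABP/Krylov--Safonov form: $\int v_n\ge c(\varepsilon,\delta)>0$, uniformly in the coefficients. To prove this one can first use a lower bound via the Green function, or a Krylov--Safonov measure estimate, after localizing away from the parabolic boundary. The boundary layer has small measure, so it can be removed by shrinking $\delta$. This contradicts $\norm{v_n}_{L^1}\to0$.

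Next I upgrade to strong convergence. We have $R_n\le|H(Ju^n)|+|u^n_\tau|$, which is bounded in $L^{p_*}$, so $\{R_n^2\}$ is uniformly integrable. Convergence in measure then gives $R_n\to0$ in $L^2$.

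\textbf{Identification, uniqueness and the a posteriori bound.} Fix any solution $w\in\mathcal X_2$ of \eqref{eq:HJB}. Let $\beta$ be a measurable selection that is pointwise, from the two candidates $\alpha_n$ and a maximizer for $w$, the one realizing the larger of the two Hamiltonian differences. Convexity of $H$ in $z$ and its Lipschitz property then give a linear operator $L^\beta\in\mathcal A$ (a measurable mixture) with
\[
L^\beta(u^n-w)=-R_n+\big[H(Ju^n)-H(Jw)-\ell^\beta(u^n-w)\big].
\]
More simply, $H(Ju^n)-H(Jw)=\ell^{\theta}(u^n-w)$ holds for a measurable pointwise convex combination of the two maximizers. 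This is exactly the mixture allowed in Assumption~\ref{ass:coeff}. Hence $L^\theta(u^n-w)=-R_n$, and the uniform estimate yields $\norm{u^n-w}_{X_2}\le C\norm{R_n}_2\to0$.

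This gives existence: $u^n$ is Cauchy in $X_2$, so its limit $u$ solves \eqref{eq:HJB} because $H$ is Lipschitz. It also gives uniqueness (take $u^n$ replaced by a second solution with $R=0$), the convergence \eqref{eq:strongconv}, and the a posteriori bound \eqref{eq:posterior}.
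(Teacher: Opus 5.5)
Your outline has the same architecture as the paper's proof: a policy-uniform inverse estimate at some $p_*>2$, monotonicity from $L^{\alpha_n}(u^{n+1}-u^n)=R_n\ge0$, a growth lemma forcing $R_n\to0$ in measure, uniform integrability from $p_*>2$, and the convex-combination secant operator for stability and uniqueness.

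\textbf{The gap: regularity when $d\ge2$.} You invoke ABP/Krylov--Safonov three times: for the maximum principle giving $u^{n+1}\ge u^n$, for the $L^\infty$ bound, and for the growth lemma. Those theorems are available for strong (super)solutions in $W^{2,1}_{d+1}$. Your iterates and increments are only known to lie in $X_{p_*}$, with $p_*$ possibly very close to $2$, so in higher dimensions none of the three steps is justified as written. For $d=1$ there is no problem, since $X_2=W^{2,1}_{d+1}\subset C(\overline Q)$.

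The missing step is a transfer by smooth approximation, which your own uniform estimate makes possible:
\begin{enumerate}
\item Mollify $a,b,c$ (and $E$ in case (ii)), keeping $S$ fixed. The approximating operators $L_j$ then stay in the structural class with the same bounds.
\item The uniform inverse gives $\norm{w_j-w}_{X_2}\le C(\norm{r_j-r}_2+\norm{(L-L_j)w}_2)\to0$, by dominated convergence on the fixed jet of $w$.
\item Hence positivity and comparison pass from the classical solutions to $X_2$ solutions (Lemma~\ref{lem:positive}). This repairs monotonicity and the $L^\infty$ bound via barriers.
\item For the growth step, compare $v_n$ from below with the zero-trace solution whose source is $\varepsilon\mathbf 1_{\Gamma_n}$. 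Approximate that source by smooth $0\le f_j\le\varepsilon$ that exceed $\varepsilon/2$ on a fixed fraction of the chosen cylinder. Apply the classical growth estimate to the smooth solutions on a subcylinder independent of $j$, then pass to an a.e.\ limit (Lemma~\ref{lem:growth}).
\end{enumerate}

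\textbf{Differences from the paper, and smaller issues.}
\begin{enumerate}
\item \emph{Extrapolation above $p=2$.} You use \v{S}ne\u{\i}berg's lemma. The paper instead factors $(L+\kappa)\mathcal R_\kappa=I-\mathcal T$ around the smooth reference operator, gets $\norm{\mathcal T}_{2\to2}<1$ from an energy identity, and interpolates between $L^2$ and $L^4$ by Riesz--Thorin. Its route is elementary and constructs the solution by a Neumann series. Yours is valid but additionally needs invertibility at $p=2$ (method of continuity inside the convex class). It also needs $[X^0_{p_0},X^0_{p_1}]_\theta=X^0_{p_\theta}$ with policy-independent constants, which follows by retraction through the fixed reference inverse.
\item \emph{Case (i) at $p=2$.} Do not divide by $a^\alpha$. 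When $1/a^\alpha$ is rough in $\tau$, the term $\int (a^\alpha)^{-1}w_\tau\,S{:}D^2w$ is not an exact time derivative. Nor can $S$ be flattened to $I$ by a change of variables in general. Instead, test the undivided equation with $-\operatorname{div}(SDw)$. This gives $\int a(S{:}D^2w)^2\ge\underline a\norm{S{:}D^2w}_2^2$ up to lower-order terms, with no contrast condition.
\item \emph{Lower-order terms in the growth step.} Applying the growth lemma directly to $L^{\alpha_n}$ requires a version with bounded drift, plus an $e^{C_0\tau}$ weight for $c$. The paper avoids this: it shows $\norm{D\delta_n}_2^2\le\norm{\delta_n}_2\norm{\Delta\delta_n}_2\to0$ and absorbs $b\cdot D\delta_n-c\delta_n$ into a nonnegative corrector with vanishing $X_2$ norm. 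Then only Lin's Pucci-type lemma without gradient terms is needed.
\item \emph{Existence.} Your existence step fixes a solution $w$ before one is known to exist. Apply the secant identity to $u^n$ and $u^m$ instead, which gives $\norm{u^n-u^m}_{X_2}\le C\norm{R_n-R_m}_2$.
\end{enumerate}
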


The exponent $p_*$ may approach $2$ as ellipticity contrast increases. Thus \eqref{eq:strongconv} is not a claim of uniform convergence of the gradients or of classical differentiability. For $d=1$, parabolic Sobolev embedding gives convergence of the values in $C^{\beta/2,\beta}(\overline Q)$ for $0<\beta<1/2$, but does not give uniform convergence of $u_x$ from the stated norm alone.

\section{Uniform estimates for linear policy evaluation}\label{sec:linear}
We first work with a linear operator
\begin{equation}\label{eq:linear}
 Lw=w_\tau-A:D^2w-b\cdot Dw+cw
\end{equation}
whose coefficients satisfy the bounds of Assumption~\ref{ass:coeff}. All statements in this section include arbitrary measurable mixtures of the allowed coefficients.

\begin{lemma}[Uniform inverse estimates for policy evaluation]\label{lem:inverse}
There are $p_*>2$ and $C_p<\infty$, depending only on the fixed coefficient bounds, the structural data, $T$, and $\Omega$, such that
\begin{equation}\label{eq:inverse}
 L:X_p^0\longrightarrow L^p(Q)\text{ is an isomorphism},\qquad
 \norm{w}_{X_p}\le C_p\norm{Lw}_{L^p(Q)},\quad 2\le p\le p_*.
\end{equation}
The constant \(C_p\) is uniform over all admissible policies, regardless of the roughness of the measurable coefficients induced by the policy.
\end{lemma}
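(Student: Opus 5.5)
We prove the a priori bound by a Cordes-type perturbation argument at $p=2$, extend it to exponents slightly above $2$ by a Sneiberg (stability of invertibility on complex interpolation scales) argument, and obtain the isomorphism by continuity in a parameter.

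\textbf{Step 1: reduction to a scalar leading part.} In case \ref{case:shape} we divide the equation by $a^\alpha$. Writing $\gamma=1/a$, the equation $Lw=\varphi$ becomes
\[
\gamma w_\tau-S:D^2w=\gamma(\varphi+b\cdot Dw-cw),\qquad \gamma\in[1/\overline a,1/\underline a].
\]
The principal spatial operator $S:D^2$ is now smooth and control independent; only the time coefficient $\gamma$ is rough. In case \ref{case:perturb} we write $A=a(I+a^{-1}E)$ with $|a^{-1}E|_F\le e_0/\underline a<1$, and treat $a^{-1}E:D^2w$ as a perturbation of $\gamma w_\tau-\Delta w$.

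\textbf{Step 2: the $L^2$ estimate for $\gamma w_\tau-S:D^2w$.} Fix $\lambda>0$, to be chosen large, and multiply by $-e^{-\lambda\tau}\operatorname{div}(S Dw)$. This is essentially $-\Delta_S w$ up to lower-order terms coming from the smoothness of $S$. Then:
\begin{itemize}[leftmargin=*]
\item The term $\gamma w_\tau\cdot(-\Delta_S w)$ does not integrate by parts cleanly, because $\gamma$ is rough in time.
\item We therefore test instead with $w_\tau$ and with $-\Delta_S w$ separately. Testing with $w_\tau$ gives
\[
\int\gamma|w_\tau|^2+\tfrac12\frac{d}{d\tau}\int SDw\cdot Dw=\int\gamma w_\tau F+\text{l.o.t.},
\]
using $w=0$ on the lateral boundary, $w(0)=0$, and the smoothness of $S$ for the commutator $\partial_\tau S$.
\item This controls $\norm{w_\tau}_2$ and $\sup_\tau\norm{Dw}_{L^2}$. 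The equation then gives $\norm{S:D^2w}_2\le C(\norm{w_\tau}_2+\norm{F}_2)$.
\item Elliptic $W^{2,2}$ regularity for the smooth operator $S:D^2$ on the smooth domain, applied at a.e.\ $\tau$, yields $\norm{D^2w}_2$.
\end{itemize}
The lower-order terms $b\cdot Dw$ and $cw$ are absorbed by a Gronwall argument in $\tau$; we use $\norm{Dw(\tau)}_{L^2}\le C\int_0^\tau\norm{w_\tau}$ and interpolate $\norm{Dw}\le\epsilon\norm{D^2w}+C_\epsilon\norm{w}$. No smallness of the contrast $\overline a/\underline a$ is needed, because $\gamma$ appears only as a positive weight.

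In case \ref{case:perturb} we test with $w_\tau$ and with $-\Delta w$. On a convex domain we have the Miranda–Talenti inequality $\norm{D^2w}_{L^2(\Omega)}\le\norm{\Delta w}_{L^2(\Omega)}$ for $w\in W^{2,2}\cap W^{1,2}_0$. This absorbs $a^{-1}E:D^2w$, since $e_0/\underline a<1$. This is why convexity is assumed.

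\textbf{Step 3: existence at $p=2$.} Regularize the coefficients by mollification. Each regularized problem is uniquely solvable by classical $W^{2,1}_p$ theory, and Step 2 gives uniform bounds. Weak compactness passes to the limit for the fixed measurable operator: $D^2w_k\rightharpoonup D^2w$ while the coefficients converge a.e.\ and boundedly, so $A_k:D^2w_k\rightharpoonup A:D^2w$. Uniqueness follows from the a priori estimate. Hence $L:X_2^0\to L^2$ is an isomorphism, with a constant depending only on the structural bounds. Alternatively, the method of continuity along $L_t=(1-t)(\partial_\tau-\Delta)+tL$ works, since mixtures stay admissible.

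\textbf{Step 4: extension to $p\in[2,p_*]$.} The spaces $X_p^0$ and $L^p(Q)$ form complex interpolation scales in $p$, and $L$ is bounded on each uniformly. Sneiberg's lemma then says that invertibility at $p=2$ with a bound $C_2$ persists on an interval $|1/p-1/2|<\delta$. Both $\delta$ and the new constants depend only on $C_2$ and on the norm of $L$ and the interpolation constants, so they are uniform over policies. This defines $p_*>2$; it degenerates as $C_2$ grows with the contrast, consistent with the remark after Theorem~\ref{thm:main}.

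\textbf{Main obstacle.} The delicate point is Step 2 in case \ref{case:shape}: handling the rough time weight $\gamma$ together with the term $\partial_\tau S$ and the boundary terms so that the final constant is genuinely independent of the contrast. If the direct testing fails to close, the fallback is to use the one-dimensional-type structure, namely divergence-form rewriting via $S$, before testing.
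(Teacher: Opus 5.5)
There is a genuine gap in case (ii) of Step~2. After dividing by $a$, the rough weight $\gamma=1/a$ sits on $w_\tau$, so testing with $-\Delta w$ produces the cross term $\int_Q\gamma\,w_\tau\,\Delta w$. This term has no sign and cannot be integrated by parts; it is exactly the obstruction you identified yourself in case (i). The only clean estimate for $\gamma\partial_\tau-\Delta$ comes from testing with $w_\tau$. It gives $\|\Delta w\|_2\le C\|\gamma w_\tau-\Delta w\|_2$ with $C$ depending on $\overline a/\underline a$, and this dependence cannot be removed. Take $w=y(\tau)\phi_k$ with $\phi_k$ a high Dirichlet eigenfunction, $\gamma=1/\overline a$ on a short initial interval carrying the source, and $\gamma=1/\underline a$ afterwards; one finds $C\ge c(\overline a/\underline a)^{1/2}$. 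The perturbation $a^{-1}E:D^2w$ is only bounded by $(e_0/\underline a)\|D^2w\|_2$, so your absorption closes only under a contrast-dependent condition such as $Ce_0<\underline a$, not under $e_0<\underline a$. Steps~3--4 inherit this gap in case (ii).

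The fix is not to divide. Test $w_\tau-a\Delta w-E:D^2w=\widetilde\varphi$, with $\widetilde\varphi=\varphi+b\cdot Dw-cw$, by $-\Delta w$ on $(0,s)\times\Omega$:
\begin{itemize}
\item the time term has constant coefficient and yields $\tfrac12\|Dw(s)\|^2_{L^2(\Omega)}\ge0$;
\item the rough coefficient yields $\int a|\Delta w|^2\ge\underline a\|\Delta w\|_2^2$ with no integration by parts;
\item Miranda--Talenti at each time gives $|\int(E:D^2w)\Delta w|\le e_0\|\Delta w\|_2^2$.
\end{itemize}
Hence $(\underline a-e_0)\|\Delta w\|_2\le\|\widetilde\varphi\|_2$, and Gronwall removes the lower-order terms. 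The paper reaches the same threshold through the constant reference operator $\partial_\tau-a_*\Delta+\kappa$. There the perturbation $(a-a_*)\Delta+E:D^2$ has relative bound $(d_*+e_0)/a_*<1$, which is equivalent to $e_0<\underline a$.

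The rest is sound and genuinely different from the paper. In case (i), testing the divided equation with $w_\tau$ does close, and the contrast enters only the constant. The lemma permits that: the constant is not, and need not be, contrast-independent, so your stated ``main obstacle'' is not one. Run Gronwall on $\frac12\int_\Omega SDw\cdot Dw$ together with Poincar\'e. The inequality $\|Dw(\tau)\|\le C\int_0^\tau\|w_\tau\|$ you quote is not true as stated.

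The paper takes a different route in case (i). It perturbs the midpoint operator $\partial_\tau-a_*B_\tau+\kappa$ to get a contraction $\|\mathcal T\|_{2\to2}\le\rho<1$. It then interpolates this single operator by Riesz--Thorin against a crude $L^4$ bound and builds the solution by a Neumann series. That avoids both mollification with compactness and any abstract interpolation lemma.

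Your Sneiberg step is valid and more flexible, since it needs only invertibility at $p=2$, but two points must be justified:
\begin{itemize}
\item that $X_p^0$ forms a complex interpolation scale, e.g.\ as the image of $L^p$ under a fixed Dirichlet reference inverse such as $(\partial_\tau-\Delta+1)^{-1}$;
\item that $p=2$ lies in the interior of the scale, e.g.\ $4/3\le p\le4$.
\end{itemize}
Finally, your alternative continuity path through $\partial_\tau-\Delta$ leaves class (i) when $S\ne I$; use $\partial_\tau-S:D^2$ instead.
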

\begin{proof}
First consider \eqref{eq:shape}. Put
\[
 a_*=(\underline a+\overline a)/2,\qquad
 d_*=(\overline a-\underline a)/2,\qquad B_\tau w=\operatorname{div}(S(\tau,\cdot)Dw).
\]
Then $L=\partial_\tau-aB_\tau-\beta\cdot D+c$, where $\beta=b-a\operatorname{div}S$ is uniformly bounded. For $\kappa>0$, let
\[
 L_{*,\kappa}=\partial_\tau-a_* B_\tau+\kappa,
 \qquad \mathcal R_\kappa=L_{*,\kappa}^{-1}:L^p(Q)\longrightarrow X_p^0.
\]
Standard Dirichlet solvability for the smooth reference operator gives, for each $1<p<\infty$, a unique $w\in X_p^0$ solving $L_{*,\kappa}w=r$ for $r\in L^p(Q)$, and
\[
 \norm{w}_{X_p}\le C(p,\kappa,S,\Omega,T)\norm{r}_p.
\]
One precise reference is \cite[Theorem 2.1, author version]{DenkHieberPruss2007}, applied with differential order $2m=2$, scalar unknown, Dirichlet boundary operator, and zero initial and boundary data; see also \cite[Chapter IV]{LSU1968}. The smooth coefficients satisfy the regularity assumptions of that theorem. Uniform positivity gives normal ellipticity and the scalar Dirichlet complementing condition. The boundary is smooth, and both the initial trace in $B^{2-2/p}_{p,p}(\Omega)$ and the lateral trace are zero, so compatibility is automatic. These hypotheses concern the reference operator alone, not the selected measurable coefficient $a$.

Let $w=\mathcal R_\kappa r$. Testing by $-B_\tau w$, integrating in time, and using the zero traces yields
\begin{align}\label{eq:energy}
 &\frac12\int_\Omega Dw(T)^{\!\top}S(T)Dw(T)
 +a_*\norm{B_\tau w}_2^2
 +\kappa\int_Q Dw^{\!\top}SDw\notag\\
 &\hspace{13mm}-\frac12\int_Q Dw^{\!\top}S_\tau Dw
 =-\int_Q r B_\tau w.
\end{align}
Let $\kappa\ge1+\norm{S_\tau}_\infty/(2s_0)$. The two gradient terms on the left of \eqref{eq:energy} then have nonnegative sum, so
$a_*\norm{B_\tau w}_2^2\le\norm{r}_2\norm{B_\tau w}_2$.
Testing the reference equation separately with $w$ gives
\[
 a_*s_0\norm{Dw}_2^2+\kappa\norm{w}_2^2
 \le\norm{r}_2\norm{w}_2.
\]
It follows that $\norm{w}_2\le\kappa^{-1}\norm{r}_2$ and
$\norm{Dw}_2^2\le(a_*s_0\kappa)^{-1}\norm{r}_2^2$. Thus
\begin{equation}\label{eq:Rbounds}
 \norm{B_\tau\mathcal R_\kappa}_{2\to2}\le a_*^{-1},\quad
 \norm{D\mathcal R_\kappa}_{2\to2}\le C\kappa^{-1/2},\quad
 \norm{\mathcal R_\kappa}_{2\to2}\le\kappa^{-1}.
\end{equation}
The smooth elliptic Dirichlet estimate
\[
 \norm{w(\tau,\cdot)}_{W^{2,2}(\Omega)}
 \le C_S\big(\norm{B_\tau w(\tau,\cdot)}_{L^2(\Omega)}
                    +\norm{w(\tau,\cdot)}_{L^2(\Omega)}\big)
\]
has a constant uniform over $\tau\in[0,T]$. The equation then also controls $w_\tau$ in $L^2$. Identity \eqref{eq:energy} is justified first for smooth data and then by density; the continuous $H^1_0(\Omega)$ time trace of $X_2^0$ justifies its terminal energy term.

For a zero-trace function the substitution $v=e^{-\kappa\tau}w$ changes $Lw=r$ into $(L+\kappa)v=e^{-\kappa\tau}r$. Factor
\begin{equation}\label{eq:factor}
 (L+\kappa)\mathcal R_\kappa=I-\mathcal T,\qquad
 \mathcal T=[(a-a_*)B_\tau+\beta\cdot D-c]\mathcal R_\kappa.
\end{equation}
Consequently
\begin{equation}\label{eq:Tbound}
 \norm{\mathcal T}_{2\to2}
 \le \frac{d_*}{a_*}+C\norm{\beta}_\infty\kappa^{-1/2}
       +\norm{c}_\infty\kappa^{-1}=:\rho<1
\end{equation}
after increasing $\kappa$. The essential point is that $d_*/a_*<1$ for every positive $\underline a$, however large the finite contrast.

Fix this $\kappa$. Regularity of the smooth reference equation gives a uniform finite bound $\norm{\mathcal T}_{4\to4}\le K$, where $K\ge1$ is independent of the measurable coefficients $a,b,c$. The reference inverses at exponents $2$ and $4$ agree on $L^4(Q)\subset L^2(Q)$: the $X_4^0$ solution is an $X_2^0$ solution, where uniqueness holds. The Riesz--Thorin theorem therefore interpolates the same operator $\mathcal T$ at the two endpoints:
\[
 \norm{\mathcal T}_{p\to p}\le\rho^{1-\theta}K^\theta,
 \qquad \frac1p=\frac{1-\theta}{2}+\frac\theta4.
\]
Choose $0<\theta_*<1$ so small that $\rho^{1-\theta_*}K^{\theta_*}<1$, and set $p_*=4/(2-\theta_*)>2$. For $2\le p\le p_*$, put $\rho_p=\rho^{1-\theta}K^\theta<1$. Then
\[
 (L+\kappa)^{-1}=\mathcal R_\kappa\sum_{j=0}^{\infty}\mathcal T^j,
 \qquad
 \norm{(L+\kappa)^{-1}}_{L^p\to X_p^0}
 \le\frac{\norm{\mathcal R_\kappa}_{L^p\to X_p^0}}{1-\rho_p}.
\]
The series converges in operator norm and constructs the $X_p^0$ solution, rather than presupposing its higher integrability. If $(L+\kappa)v=0$, apply \eqref{eq:factor} to $L_{*,\kappa}v$ to obtain uniqueness. Undoing the time weight gives \eqref{eq:inverse}. Its constants may grow with $T$, but no change to the model discount has been made. This auxiliary contraction solves each linear evaluation equation; it is not a contraction-rate assertion for the Howard sequence.

For \eqref{eq:perturb}, use $B=\Delta$. The Miranda--Talenti inequality on a convex domain gives $\norm{D^2w}_2\le\norm{\Delta w}_2$ for $w\in H^2\cap H^1_0$ (also used in the Cordes framework of \cite{SmearsSuli2014}). The extra contribution $E:D^2\mathcal R_\kappa$ has operator norm at most $e_0/a_*$. Thus the leading part of \eqref{eq:Tbound} is $(d_*+e_0)/a_*<1$, precisely because $e_0<\underline a$. The same interpolation argument applies.
\end{proof}

\begin{lemma}[Positivity and smooth approximation]\label{lem:positive}
The inverse in Lemma~\ref{lem:inverse} preserves nonnegativity almost everywhere. Comparison, also almost everywhere, holds for $X_2$ functions with ordered right sides and ordered parabolic traces admitting smooth compatible lifts. Suppose that linear coefficients have common structural bounds, use the same fixed field $S$ in case \eqref{eq:shape}, and converge almost everywhere to those of $L$. If $r_j\to r$ in $L^2$, the corresponding zero-trace solutions converge in $X_2$.
\end{lemma}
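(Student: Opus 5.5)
Three claims have to be proved: positivity of the inverse, comparison, and stability under a.e.\ convergence of the coefficients. I would take them in that order, and the last one carries most of the work.

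\emph{Positivity.} Let $r\ge0$ and let $w\in X_2^0$ be the solution of $Lw=r$ from Lemma~\ref{lem:inverse}. Set $v=e^{-\lambda\tau}w$ with $\lambda\ge0$; then $v$ solves $(L+\lambda)v=e^{-\lambda\tau}r\ge0$. Test this equation with $v^-=\max(-v,0)$. Since $v$ has zero lateral trace, $v^-\in L^2(0,T;H^1_0)$ and its initial trace vanishes. The obstacle is the nondivergence term $A:D^2v$, which cannot be integrated by parts when $a$ is merely measurable. I see two routes. First, write $A=aS$ and use the factorization \eqref{eq:factor}. Second, prove a weak maximum principle by a parabolic Aleksandrov--Bakelman--Pucci (Krylov--Tso) estimate, which holds for $W^{2,1}_{d+1,\mathrm{loc}}$ functions.

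To avoid needing an exponent $d+1>p_*$, I would instead approximate. Mollify $a,b,c$ to smooth $a_\epsilon,b_\epsilon,c_\epsilon$ with the same bounds, and mollify $r$ to $r_\epsilon\ge0$. The smooth problems have classical solutions $w_\epsilon\ge0$ by the classical maximum principle. Then pass to the limit using the stability statement proved below. Comparison follows by linearity: subtract smooth compatible lifts of the ordered traces, which reduces the claim to the zero-trace case with nonnegative right side and a nonnegative lift correction. The lift correction is nonnegative by the classical maximum principle for the smooth lifts.

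\emph{Stability.} Let $w_j\in X_2^0$ solve $L_jw_j=r_j$ and let $w$ solve $Lw=r$. Then
\[
 L(w_j-w)=(r_j-r)+(L-L_j)w_j ,
\]
and Lemma~\ref{lem:inverse}, with constants uniform over admissible coefficients, gives
\[
 \norm{w_j-w}_{X_2}\le C_2\big(\norm{r_j-r}_2+\norm{(L-L_j)w_j}_2\big).
\]
The difficulty is the term $(a-a_j)S:D^2w_j$: the coefficient difference tends to $0$ a.e.\ and is bounded, but $D^2w_j$ is only bounded in $L^2$, so dominated convergence does not apply directly.

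I see two remedies. The first swaps the roles of the operators: write $L_j(w_j-w)=(r_j-r)+(L-L_j)w$. Now the fixed function $D^2w\in L^2$ multiplies a bounded coefficient difference tending to $0$ a.e., so dominated convergence gives $(L-L_j)w\to0$ in $L^2$. The uniform bound on $L_j^{-1}$ from Lemma~\ref{lem:inverse} then yields $w_j\to w$ in $X_2$. This is the approach I would use. The second remedy uses the exponent $p_*$: $D^2w_j$ is bounded in $L^{p_*}$, so Hölder's inequality together with convergence in measure also gives the result when $r_j$ is bounded in $L^{p_*}$. In case \eqref{eq:perturb} the extra term $E_j-E$ is handled the same way.

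Returning to positivity: with $L_j$ the mollified operators and $r_j\ge0$ the mollified data, the solutions $w_j\ge0$ converge to $w$ in $X_2$. Since $X_2\subset L^2$, a subsequence converges a.e., so $w\ge0$ a.e. The main obstacle I expect is confirming that the smooth approximants stay in the admissible class, with the same field $S$ and the same structural bounds, so that the constants of Lemma~\ref{lem:inverse} are uniform in $j$. Mollification preserves convex-type bounds such as $\underline a\le a_\epsilon\le\overline a$ and $|E_\epsilon|_F\le e_0$, so I expect this to go through.
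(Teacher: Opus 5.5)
Your stability and positivity arguments are the paper's. The identity $L_j(w_j-w)=(r_j-r)+(L-L_j)w$, the uniform bound on $L_j^{-1}$, and dominated convergence on the fixed jets of $w$ are exactly its estimate. Positivity is obtained in the same way: mollify the coefficients (keeping $S$ fixed in case \eqref{eq:shape}, also mollifying $E$ in case \eqref{eq:perturb}) and take nonnegative smooth sources. Then use the maximum principle for the smooth problems and pass to an a.e.\ convergent subsequence.

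\textbf{The gap is in the comparison step, which does not work as you wrote it.} Let $Lw=r\ge0$ and let $G$ be a smooth lift of a nonnegative trace.
\begin{itemize}
\item Subtracting $G$ gives $L(w-G)=r-LG$. This right side has no sign, so you do not reduce to the zero-trace case with nonnegative data.
\item Alternatively, split $w=w^0+\phi$ with $w^0\in X_2^0$, $Lw^0=r$, so that $L\phi=0$ and $\phi$ has the trace of $G$. Then $\phi$ solves an equation with the rough operator $L$. The classical maximum principle applied to the smooth function $G$ says nothing about the sign of $\phi$.
\end{itemize}
Either way, the ``nonnegative lift correction'' is exactly what remains to be proved.

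The repair uses only tools you already have, and it is what the paper does. Carry the trace through the approximation:
\begin{itemize}
\item Solve $L_jw_j=r_j$ with smooth $r_j\ge0$, $r_j\to r$ in $L^2$, and the same trace $G$. Then $w_j\ge0$ by the maximum principle for the smooth problem.
\item The difference $w_j-G\in X_2^0$ solves $L_j(w_j-G)=r_j-L_jG$.
\item Since $G$ is smooth and the coefficients are bounded and converge a.e., dominated convergence gives $L_jG\to LG$ in $L^2$.
\item Your stability estimate, applied with sources $r_j-L_jG\to r-LG$, yields $w_j\to w$ in $X_2$, hence $w\ge0$ a.e.
\end{itemize}
Applying this to the difference of two functions with ordered right sides and ordered traces (lift $G_1-G_2$) gives comparison.
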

\begin{proof}
For the convergence statement, let $w=L^{-1}r$ and $w_j=L_j^{-1}r_j$. Then
\[
 \norm{w_j-w}_{X_2}
 \le C\big(\norm{r_j-r}_2+\norm{(L-L_j)w}_2\big)\longrightarrow0.
\]
The second term tends to zero by dominated convergence applied to the fixed $L^2$ jets of $w$.

Extend $a$ by a number in $[\underline a,\overline a]$, extend $b,c$ by zero, and, in the perturbation case, extend $E$ by zero. Mollification and restriction to $Q$ preserve their convex bounds. Under \eqref{eq:shape}, keep $S$ fixed and smooth only $a,b,c$; under \eqref{eq:perturb}, smooth $a,E,b,c$. Along a subsequence the coefficients converge almost everywhere. For a nonnegative source choose nonnegative smooth approximations. The smooth problems satisfy the maximum principle, and their $X_2$ convergence proves positivity of the rough inverse.

For completeness, let $Lw=r\ge0$ with a fixed smooth lift $G$ of a nonnegative parabolic trace. Solve $L_jw_j=r_j\ge0$ with the same trace $G$. The maximum principle gives $w_j\ge0$. Subtracting $G$ and using the uniform inverse proves $w_j\to w$ by the preceding estimate, so $w\ge0$. Apply this to the difference of two functions to obtain comparison, in particular comparison against smooth barriers. This argument does not require continuity of the rough $X_2$ solution in high dimension.
\end{proof}

\begin{remark}[Perturbations of a fixed matrix field]
Lemma~\ref{lem:inverse} also permits $A=aS+E$ when
$\sup|E|_F\norm{D^2\mathcal R_\kappa}_{2\to2}<1-\rho$ in \eqref{eq:Tbound}, provided uniform ellipticity is retained. This yields a neighborhood depending on the reference inverse constant. Condition \eqref{eq:perturb}, by contrast, gives the explicit sufficient bound $e_0<\underline a$ on convex domains.
\end{remark}

\section{Convergence in measure and strong convergence}\label{sec:convergence}
We first bound the iterates. We then prove the main observation: a nonnegative source must vanish in measure if the corresponding linear response vanishes. Applied to policy improvements, this gives strong convergence of the Bellman residual and hence of the Howard sequence.

\begin{lemma}[Uniform bounds and policy improvement]\label{lem:iterbounds}
All iterates are well defined in $\mathcal X_{p_*}$. There are constants independent of $n$ such that
\begin{equation}\label{eq:iterbounds}
 \norm{u^n}_{X_{p_*}}\le M_p\quad(n\ge0),\qquad
 |u^n|\le M_\infty\quad(n\ge1),\qquad
 \norm{R_n}_{p_*}\le M_R\quad(n\ge1).
\end{equation}
Moreover, for $n\ge1$,
\begin{equation}\label{eq:increment}
 R_n\ge0,\qquad
 L^{\alpha_n}(u^{n+1}-u^n)=R_n,\qquad u^{n+1}\ge u^n.
\end{equation}
\end{lemma}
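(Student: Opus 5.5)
The plan is to proceed by induction, using Lemma~\ref{lem:inverse} and Lemma~\ref{lem:positive} throughout; the only delicate point is the uniform sup bound.

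\textbf{Well-posedness.} Given $u^n\in\mathcal X_{p_*}$, the map $(\tau,x,\alpha)\mapsto h(\tau,x,Ju^n,\alpha)$ is Carath\'eodory, measurable in $(\tau,x)$ and continuous in $\alpha$ on the compact metrizable $U$. The measurable maximum theorem (Kuratowski--Ryll-Nardzewski) therefore yields a measurable selector $\alpha_n$. The induced coefficients $A^{\alpha_n},b^{\alpha_n},c^{\alpha_n}$ are measurable and obey the structural bounds of Assumption~\ref{ass:coeff}; in case \eqref{eq:shape} they use the same field $S$. Writing $u^{n+1}=\widetilde g+w$, the equation becomes $L^{\alpha_n}w=f^{\alpha_n}-L^{\alpha_n}\widetilde g$. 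The right side lies in $L^{p_*}$, with norm bounded by a constant $K_0$ that depends only on the coefficient bounds and $\norm{g}_{C^2}$. Lemma~\ref{lem:inverse} then gives a unique $w\in X^0_{p_*}$, and
\[
\norm{u^{n+1}}_{X_{p_*}}\le\norm{\widetilde g}_{X_{p_*}}+C_{p_*}K_0
\]
for all $n\ge0$. Together with $u^0$ being smooth, this gives $M_p$.

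\textbf{Sup bound.} This is the step I expect to be the main obstacle, since I want an argument that does not use continuity of $u^n$. Set $\Phi=\|g\|_\infty+\|f\|_\infty\tau$. Then $L^{\alpha_n}\Phi=\|f\|_\infty+c\Phi\ge f^{\alpha_n}$, and $\Phi\ge u^{n+1}$ on $\partial_pQ$, where $u^{n+1}$ has the traces of $\widetilde g$. Both $\Phi$ and $\widetilde g$ are smooth lifts. The comparison part of Lemma~\ref{lem:positive} gives $u^{n+1}\le\Phi$, and symmetrically $u^{n+1}\ge-\Phi$. Hence $M_\infty=\|g\|_\infty+T\|f\|_\infty$.

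\textbf{Residual bound.} $R_n=H(Ju^n)-u^n_\tau$ is pointwise bounded by $C(1+|u^n|+|Du^n|+|D^2u^n|+|u^n_\tau|)$, because $|H(z)|\le C(1+|z|)$. So $\norm{R_n}_{p_*}\le C(1+M_p)=:M_R$.

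\textbf{Improvement identity.} For $n\ge1$, $u^n$ solves $L^{\alpha_{n-1}}u^n=f^{\alpha_{n-1}}$. Hence $u^n_\tau=h(Ju^n,\alpha_{n-1})\le H(Ju^n)$ a.e., that is, $R_n\ge0$. Next, by the choice of $\alpha_n$, $H(Ju^n)=h(Ju^n,\alpha_n)=\ell^{\alpha_n}u^n+f^{\alpha_n}$. Therefore
\[
L^{\alpha_n}u^n=u^n_\tau-H(Ju^n)+f^{\alpha_n}=f^{\alpha_n}-R_n.
\]
Subtracting this from \eqref{eq:Howard} gives $L^{\alpha_n}(u^{n+1}-u^n)=R_n$. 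The difference $u^{n+1}-u^n$ lies in $X^0_{p_*}$, since both functions share the traces of $\widetilde g$. The positivity of the inverse in Lemma~\ref{lem:positive} then yields $u^{n+1}\ge u^n$ a.e. For $n=0$ no sign is claimed, because $u^0$ is not assumed to be a subsolution.
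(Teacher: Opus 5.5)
Your proof is correct and follows the paper's argument step by step. You use measurable selection, Lemma~\ref{lem:inverse} after subtracting $\widetilde g$, the barriers $\pm(\norm{g}_\infty+\tau\sup_\alpha\norm{f^\alpha}_\infty)$ with comparison from Lemma~\ref{lem:positive}, the linear growth of $H$ for the residual bound, and the improvement identity combined with positivity of the inverse, exactly as the paper does (the paper only adds an explicit remark about fixing measurable representatives of the weak derivatives, which your Carath\'eodory step uses implicitly).
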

\begin{proof}
The measurable maximum theorem applies to the compact action space and the Carath\'eodory function $h$. Choose measurable representatives of the weak derivatives; changes on a common null set do not change the evaluation equation. The selected coefficients retain the bounds of Assumption~\ref{ass:coeff}. Subtracting $\widetilde g$ and applying Lemma~\ref{lem:inverse} gives
\[
 \norm{u^{n+1}}_{X_{p_*}}
 \le C\bigl(\norm{f^{\alpha_n}}_{p_*}
             +\norm{L^{\alpha_n}\widetilde g}_{p_*}
             +\norm{\widetilde g}_{X_{p_*}}\bigr)\le C.
\]
This proves existence and the uniform Sobolev bound by induction. Put $M_f=\sup_\alpha\norm{f^\alpha}_\infty$. The smooth barriers
\[
 \underline v(\tau)=-\norm{g}_\infty-M_f\tau,\qquad
 \overline v(\tau)=\norm{g}_\infty+M_f\tau
\]
have ordered parabolic traces and satisfy
$L^{\alpha_n}\underline v\le-M_f\le f^{\alpha_n}
\le M_f\le L^{\alpha_n}\overline v$, since $c^\alpha\ge0$.
Lemma~\ref{lem:positive} gives the common $L^\infty$ bound.

For $n\ge1$, the previous evaluation and the new maximization imply
\[
 u^n_\tau=\ell^{\alpha_{n-1}}u^n+f^{\alpha_{n-1}}
 \le H(Ju^n)=\ell^{\alpha_n}u^n+f^{\alpha_n}.
\]
The definition of $R_n$ and the maximizing property of $\alpha_n$ give
\[
 L^{\alpha_n}u^n=f^{\alpha_n}-R_n.
\]
Subtracting this identity from the evaluation equation for $u^{n+1}$ yields \eqref{eq:increment}, and positivity of the inverse gives monotonicity. Finally, $|H(z)|\le C(1+|z|)$ gives the residual bound from the Sobolev bound.
\end{proof}

\begin{lemma}[Uniform growth from a measurable source]\label{lem:growth}
Fix a cylinder $Q'=(s-r^2,s)\times B_r(x_0)$ compactly contained in $Q$, a number $0<m_0\le|Q'|$, and $\gamma>0$. Let $A$ belong to either matrix class in Assumption~\ref{ass:coeff}. For a measurable $\Gamma\subset Q'$ with $|\Gamma|\ge m_0$, let
\[
 P_Av:=v_\tau-A:D^2v=\gamma\mathbf1_\Gamma\text{ in }Q,\qquad v\in X_2^0.
\]
There is a positive constant $c_*=c_*(Q',m_0,\underline\nu,\overline\nu,d)$, independent of $A$ and $\Gamma$, such that
\begin{equation}\label{eq:growth}
 v\ge0\ \text{a.e.},\qquad \norm{v}_{L^2(Q)}\ge c_*\gamma.
\end{equation}
Here $\underline\nu,\overline\nu$ are common ellipticity bounds for $A$.
\end{lemma}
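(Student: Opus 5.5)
Nonnegativity is immediate. The operator $P_A$ is the linear operator \eqref{eq:linear} with $b=0$ and $c=0$, and its coefficients lie in an admissible class. Lemma~\ref{lem:inverse} therefore gives a unique $v\in X_2^0$, and Lemma~\ref{lem:positive} gives $v\ge0$ a.e. because $\gamma\mathbf1_\Gamma\ge0$. The lower bound is the real content. I would derive it from the parabolic growth lemma for nondivergence equations with measurable coefficients (Krylov--Safonov type, in the form recorded in \cite{Lin2015}). That estimate states the following: if $v\ge0$ is a strong supersolution of $v_\tau-A:D^2v\ge\gamma\mathbf1_\Gamma$ in a cylinder, with $|\Gamma|\ge m_0$, then $v\ge c\,\gamma$ on a later subcylinder. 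The constant depends only on the ellipticity bounds, the dimension, the geometry, and $m_0/|Q'|$.

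\textbf{Step 1: reduction to measure estimates.} Choose a slightly larger cylinder $Q''=(s-4r^2,s+r^2)\times B_{2r}(x_0)$, shrinking $r$ beforehand if necessary, but keeping $m_0$ fixed relative to a covering by a finite number of cylinders depending only on $Q'$. Suppose instead that $\norm{v}_{L^2(Q)}$ were small compared with $\gamma$. Normalise by setting $w=v/\gamma$. Then $w\ge0$, $P_Aw=\mathbf1_\Gamma$, and $\norm{w}_{L^2}$ is small. The aim is a positive lower bound on $\int_{Q^+}w$ over a forward cylinder $Q^+=(s,s+r^2/2)\times B_{r}(x_0)$, or over $Q'$ itself, valid uniformly in $A$ and $\Gamma$.

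\textbf{Step 2: the growth lemma via a barrier or the ABP--Krylov--Tso estimate.} I would first try the following route. Let $\psi$ be a smooth cutoff. Then $w$ is a nonnegative supersolution, and Krylov's weak Harnack inequality (or Lin's growth lemma) gives
\[
\inf_{Q^+} w\ \ge\ c\,\Bigl(\fint_{Q'}(\mathbf 1_\Gamma)^{d+1}\Bigr)^{1/(d+1)}r^2\ \ge\ c(m_0,Q',\underline\nu,\overline\nu,d).
\]
Equivalently, the parabolic Aleksandrov--Bakelman--Pucci--Krylov--Tso estimate applied to $-w$ plus a quadratic barrier yields a lower bound depending only on $\norm{\mathbf1_\Gamma}_{L^{d+1}}\ge m_0^{1/(d+1)}$. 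Since $w\ge0$ throughout $Q$, no boundary term spoils the estimate. Integrating over $Q^+\subset Q$ (if $s$ is near $T$, use a subcylinder of $Q'$ in its later half instead) gives $\norm{w}_{L^2(Q)}\ge c\,|Q^+|^{1/2}$, which is \eqref{eq:growth} with $c_*=c|Q^+|^{1/2}$.

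\textbf{Main obstacle.} These estimates are classically proved for $W^{2,d+1}_{\rm loc}$ strong solutions, whereas here $v\in X_2$ only and $d+1$ may exceed $p_*$ when $d\ge2$. I would handle this through the approximation built into Lemma~\ref{lem:positive}. Mollify $A$ inside the same structural class, and mollify $\mathbf1_\Gamma$ to a nonnegative smooth $\phi_j\to\mathbf1_\Gamma$ in $L^2$ and in $L^{d+1}$. For the smooth problems the growth lemma applies with constants depending only on the ellipticity bounds. The mass of $\phi_j$ on $Q'$ tends to $|\Gamma|\ge m_0$, so it stays bounded below by $m_0/2$. Lemma~\ref{lem:positive} gives $v_j\to v$ in $X_2$, hence in $L^2$, and the uniform lower bound passes to the limit. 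Uniformity in $A$ and $\Gamma$ follows because the constants depend only on $\underline\nu,\overline\nu,d,Q',m_0$.
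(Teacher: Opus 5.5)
Your architecture matches the paper's: positivity from Lemma~\ref{lem:positive}, a lower bound from the growth theorem of \cite{Lin2015} applied to smooth approximations, and passage to the rough solution through $X_2$ convergence. However, Step~2, which carries the whole lemma, is justified incorrectly.
\begin{itemize}
\item The weak Harnack inequality bounds $\inf_{Q^+}w$ below by an $L^\epsilon$ average of $w$ itself on an earlier cylinder. A nonnegative source does not enter it.
\item The ABP--Krylov--Tso estimate bounds subsolutions from above by $\|(Pu)^+\|_{L^{d+1}}$. For $u=-w$, where $P_A(-w)=-\mathbf 1_\Gamma\le0$, it reduces to the maximum principle. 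Combined with a barrier, it gives positivity only when $\Gamma$ fills nearly all of the cylinder.
\item Your displayed bound $\inf w\ge c\|\mathbf 1_\Gamma\|_{L^{d+1}}$ is false as a general estimate. Take the heat equation with source on a parabolic cylinder of radius $\epsilon$. The response at unit distance is of order $\epsilon^{d+2}$, whereas $\|\mathbf 1_\Gamma\|_{L^{d+1}}\sim\epsilon^{(d+2)/(d+1)}$.
\end{itemize}
Forcing positivity from a source on an arbitrary fraction $m$ of a cylinder needs the measure-theoretic Krylov--Safonov machinery. That is exactly what \cite[Theorem 1.3]{Lin2015} packages. A source above a level on a fraction $m$ of the unit cylinder forces $v\ge c\,m^\rho e^{-\beta/m^2}$ times that level on the top slab $\{|x|\le\kappa,\ -\kappa m\le\tau\le0\}$. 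You should invoke that statement as a black box, as the paper does, rather than try to derive it from weak Harnack or ABP.

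The region of the lower bound also needs care.
\begin{itemize}
\item Your fallback, ``a subcylinder of $Q'$ in its later half'', fails. If $\Gamma=(s-mr^2,s)\times B_r(x_0)$ with $m<1/2$, uniqueness gives $v\equiv0$ for $\tau<s-mr^2$. So $v$ has zero essential infimum on the later half.
\item The region must shrink with $m$. The paper uses the slab $Y=(s-r^2m_*/2,s)\times B_{r/2}(x_0)\subset Q'$. This also removes any need for your enlarged cylinder $Q''$ and forward cylinder $Q^+$, neither of which need lie in $Q$.
\end{itemize}
Finally, Lin's estimate is triggered by the measure of a superlevel set of the source, so a lower bound on the mass of $\phi_j$ is not enough by itself. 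Take $0\le\phi_j\le1$ and use Chebyshev to get $|\{\phi_j>1/2\}\cap Q'|\ge m_0/2$ for large $j$. Then use that both $m^\rho e^{-\beta/m^2}$ and the slab increase with $m$, so all $v_j$ are bounded below on one $j$-independent set. Only then does the bound survive the a.e.\ limit along a subsequence given by Lemma~\ref{lem:positive}.
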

\begin{proof}
For smooth coefficients and a nonnegative smooth source, positivity follows from the maximum principle. A solution satisfies
\[
 v_\tau-\mathcal P^-(D^2v)\ge P_Av,
\]
where
\[
 \mathcal P^-(M)=\underline\nu\sum_{e_i>0}e_i
                    +\overline\nu\sum_{e_i<0}e_i
\]
and $e_i$ are the eigenvalues of $M$. The growth estimate \cite[Theorem 1.3, author version]{Lin2015} applies to nonnegative supersolutions; it does not require zero boundary values on the smaller cylinder. In the unit cylinder, a source exceeding a level on a fraction $m$ of the cylinder gives a lower bound $c m^\rho\exp(-\beta/m^2)$ times that level, for $|x|\le\kappa$ and $-\kappa m\le\tau\le0$, where $0<\kappa<1$ is fixed.

We record explicitly how to pass this estimate to the rough equation. Approximate the coefficients as in Lemma~\ref{lem:positive}, and choose smooth $f_j$ such that $0\le f_j\le\gamma$ and $f_j\to\gamma\mathbf1_\Gamma$ in $L^2(Q)$. On $\Gamma$,
\[
 |\{f_j\le\gamma/2\}\cap\Gamma|
 \le\frac4{\gamma^2}\norm{f_j-\gamma\mathbf1_\Gamma}_2^2.
\]
Thus, for large $j$, the fraction $m_j:=|\{f_j>\gamma/2\}\cap Q'|/|Q'|$ satisfies $m_j\ge m_*:=m_0/(2|Q'|)>0$. Let $v_j$ be the nonnegative global zero-trace solutions. Rescale $Q'$ to the unit cylinder; the source level becomes $r^2\gamma/2$. Choose $\kappa=1/2$. The lower-bound region for $m_j$ contains the region for $m_*$, and $m^\rho\exp(-\beta/m^2)$ is increasing for $m>0$. Hence all sufficiently large $j$ satisfy
\begin{equation}\label{eq:fixedgrowthregion}
 v_j\ge \frac{c r^2\gamma}{2}\,
             m_*^\rho e^{-\beta/m_*^2}
 \quad\hbox{on }\ 
 Y=(s-r^2m_*/2,s)\times B_{r/2}(x_0).
\end{equation}
The set $Y$ has fixed positive measure and is independent of the approximating coefficients and sources. Lemma~\ref{lem:positive} gives $v_j\to v$ in $X_2$. Passing to an almost-everywhere convergent subsequence proves \eqref{eq:fixedgrowthregion} for $v$ almost everywhere. Its $L^2(Y)$ norm gives \eqref{eq:growth}. This supplies the required strong-solution version without treating a high-dimensional $X_2$ function as a continuous viscosity supersolution.
\end{proof}

\begin{remark}[Direct use of the growth estimate in one dimension]
Krylov \cite[Theorem 2.1]{Krylov2013} gives a shorter linear proof of the growth estimate for measurable coefficients and functions in $W^{2,1}_{d+1,\mathrm{loc}}\cap C$. For $d=1$, this applies directly to $X_2$ solutions, which have continuous representatives, and thus avoids the smooth approximation above. Normalize the cylinder to $\widehat Q=(0,1)\times(-1,1)$ and put $P=\partial_\tau-a\partial_{xx}$. For $\gamma>0$ and $0<m<1$, a nonnegative supersolution with $Pv\ge0$ and $|\{Pv\ge\gamma\}|\ge m|\widehat Q|$ satisfies $v\ge c\gamma m^\theta e^{-C/m}$ on
\[
 (1-m/2,1)\times(-r_0,r_0),\qquad r_0=1-1/\sqrt2.
\]
The constants depend only on common ellipticity bounds. The approximation argument in Lemma~\ref{lem:growth} remains necessary in our higher-dimensional $X_2$ setting, where the exponent $d+1$ is not available in general.
\end{remark}

\begin{lemma}[Vanishing responses and convergence in measure]\label{lem:residual}
Let $L_n$ have the common coefficient bounds and structure of Lemma~\ref{lem:inverse}. Suppose
\[
 L_n\delta_n=R_n\ge0,\qquad \delta_n\in X_2^0,\qquad
 \sup_n\norm{\delta_n}_{X_2}<\infty,\qquad \norm{\delta_n}_2\to0.
\]
Then $R_n\to0$ in measure on $Q$. If in addition $\sup_n\norm{R_n}_{p_*}\le M_R$ for some $p_*>2$, then $R_n\to0$ in $L^2(Q)$.
\end{lemma}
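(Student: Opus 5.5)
We argue by contradiction and compare $\delta_n$ with the solution of a model problem with an indicator source. The comparison is supplied by Lemma~\ref{lem:positive}, and the lower bound by Lemma~\ref{lem:growth}. Suppose $R_n\not\to0$ in measure. Then there are $\gamma>0$, $\eta>0$ and a subsequence with $|\{R_n>2\gamma\}|\ge\eta$. Since $Q$ is bounded, we can remove from $Q$ a thin neighborhood of the parabolic boundary of measure less than $\eta/4$, and cover the rest by finitely many cylinders $Q'_1,\dots,Q'_N$ of the form in Lemma~\ref{lem:growth}, each compactly contained in $Q$. By pigeonhole, after passing to a further subsequence there is a fixed index $k$ with $|\Gamma_n|\ge m_0:=\eta/(2N)$, where $\Gamma_n=\{R_n>2\gamma\}\cap Q'_k$.

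Next we absorb the lower-order terms. Write $L_n=\partial_\tau-A_n:D^2-b_n\cdot D+c_n$. The growth lemma is stated for $P_A=\partial_\tau-A:D^2$, so we pass to the time-weighted function $e^{-\lambda\tau}\delta_n$ as in the proof of Lemma~\ref{lem:inverse}, or simply apply the argument of Lemma~\ref{lem:growth} directly to $L_n$. The Krylov--Safonov/Lin growth estimate holds for operators with bounded drift and zeroth-order terms $c\ge0$ on a fixed cylinder, with constants depending on the bounds. Here we accept that mild extension. Concretely, let $v_n\in X_2^0$ solve $L_nv_n=\gamma\mathbf 1_{\Gamma_n}$. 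Then $L_n(\delta_n-v_n)=R_n-\gamma\mathbf 1_{\Gamma_n}\ge0$, because $R_n\ge0$ everywhere and $R_n>2\gamma$ on $\Gamma_n$. By the positivity statement of Lemma~\ref{lem:positive} this gives $\delta_n\ge v_n\ge0$ a.e. The growth estimate, applied via the approximation argument of Lemma~\ref{lem:growth} with coefficients of common bounds, yields $\norm{v_n}_2\ge c_*\gamma$ with $c_*$ independent of $n$. Hence $\norm{\delta_n}_2\ge c_*\gamma>0$, contradicting $\norm{\delta_n}_2\to0$. So $R_n\to0$ in measure. The hypothesis $\sup_n\norm{\delta_n}_{X_2}<\infty$ is not really needed for this part. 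It only ensures that the comparison takes place in the admissible class.

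For the $L^2$ upgrade we use uniform integrability. For $\varepsilon>0$, split
\[
\norm{R_n}_2^2\le\varepsilon^2|Q|+\int_{\{R_n>\varepsilon\}}R_n^2\le\varepsilon^2|Q|+\norm{R_n}_{p_*}^2\,|\{R_n>\varepsilon\}|^{1-2/p_*},
\]
where the last step is Hölder's inequality. The first term is small for small $\varepsilon$. The second tends to zero as $n\to\infty$, by convergence in measure and the bound $M_R$. This gives $R_n\to0$ in $L^2(Q)$.

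The main obstacle is the growth step when drift and discount are present, since Lemma~\ref{lem:growth} is stated only for pure second-order operators. The first thing to try is to check that the result of \cite{Lin2015} (and Krylov's version) covers bounded $b$ and $c\ge0$ after rescaling to the unit cylinder. Rescaling by $r$ makes the drift $rb$ and the discount $r^2c$, so the perturbation is small on small cylinders. If that fails, the fallback is to treat $b\cdot D v_n-c v_n$ as a source. One would then bound $v_n$ from below by the pure-$P_A$ solution minus a correction controlled by $\norm{v_n}_{X_2}$ on a short cylinder, where it is small, and choose the cylinders in the covering small enough for the correction to be absorbed.
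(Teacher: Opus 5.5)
\textbf{There is a genuine gap, at the step you flag yourself: the lower-order terms.} Lemma~\ref{lem:growth} is proved only for $P_A=\partial_\tau-A:D^2$. Your comparison function $v_n$ solves $L_nv_n=\gamma\mathbf1_{\Gamma_n}$ with the drift $b_n$ and discount $c_n$ still present, so the lower bound $\norm{v_n}_2\ge c_*\gamma$ is assumed rather than proved.

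Neither suggested reduction supplies it.
\begin{itemize}
\item The weight $e^{-\kappa\tau}$ from Lemma~\ref{lem:inverse} increases the zeroth-order coefficient rather than removing it. Removing $c_n$ needs the opposite weight $e^{C_0\tau}$ together with $v_n\ge0$, and no time weight touches $b_n\cdot Dv_n$.
\item The fallback, treating $b_n\cdot Dv_n-c_nv_n$ as a source, is not quantified. That source has $L^2$ norm at most $C\norm{v_n}_{X_2}\le C\gamma|\Gamma_n|^{1/2}$, which is linear in $\gamma$ like the growth lower bound. Shrinking the cylinders also shrinks $c_*(Q',m_0)$, so it is unclear the correction can ever be absorbed.
\end{itemize}
A growth lemma valid for bounded drift, as in Krylov--Safonov theory, would close the gap. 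It is, however, an extra external input not provided by Lemma~\ref{lem:growth}, and it would also have to be pushed through the smooth-approximation argument there.

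\textbf{The missing idea is that the hypothesis you discard, $\sup_n\norm{\delta_n}_{X_2}<\infty$, makes the lower-order terms negligible for $\delta_n$ itself.} Since $\delta_n\in X_2^0$, integration by parts gives $\norm{D\delta_n}_2^2=-\int_Q\delta_n\Delta\delta_n\le\norm{\delta_n}_2\norm{\Delta\delta_n}_2\to0$. Hence $P_{A_n}\delta_n=R_n+\eta_n$ with $\eta_n=b_n\cdot D\delta_n-c_n\delta_n\to0$ in $L^2$. Let $w_n$ solve $P_{A_n}w_n=|\eta_n|$ with zero traces. Then $w_n\ge0$, and Lemma~\ref{lem:inverse} gives $\norm{w_n}_{X_2}\to0$. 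So $z_n=\delta_n+w_n\ge0$ satisfies $P_{A_n}z_n\ge R_n$ and $\norm{z_n}_2\to0$.

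Your covering and comparison argument then runs with the pure operator $P_{A_n}$: compare $z_n$ with the zero-trace solution of $P_{A_n}v_n=\gamma\mathbf1_{\Gamma_n}$ and apply Lemma~\ref{lem:growth} exactly as stated. This is the paper's route. Your $L^2$ upgrade via H\"older is correct.

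One minor point: cylinders compactly contained in $Q$ must also stay away from $\tau=T$. You should therefore remove a neighborhood of the whole boundary of $Q$, not only of the parabolic boundary.
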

\begin{proof}
Positivity gives $\delta_n\ge0$. The Hessians are bounded in $L^2$, so integration by parts gives
\begin{equation}\label{eq:gradientincrement}
 \norm{D\delta_n}_2^2
 =-\int_Q\delta_n\Delta\delta_n
 \le\norm{\delta_n}_2\norm{\Delta\delta_n}_2\longrightarrow0.
\end{equation}
Write $L_n=\partial_\tau-A_n:D^2-b_n\cdot D+c_n$. The equation becomes
\[
 P_{A_n}\delta_n=R_n+\eta_n,\qquad
 \eta_n=b_n\cdot D\delta_n-c_n\delta_n\longrightarrow0\text{ in }L^2.
\]
Let $P_{A_n}w_n=|\eta_n|$, with zero traces. Then $w_n\ge0$ and $\norm{w_n}_{X_2}\to0$. Consequently
\begin{equation}\label{eq:corrected}
 z_n:=\delta_n+w_n\ge0,\qquad
 P_{A_n}z_n\ge R_n,\qquad \norm{z_n}_2\longrightarrow0.
\end{equation}

If convergence in measure failed, there would be $\gamma,m>0$ and a subsequence for which $|\{R_n>\gamma\}|\ge m$. Choose a compact set $K\subset Q$ with $|Q\setminus K|<m/2$, and cover $K$ by $N$ cylinders $Q_i$ compactly contained in $Q$. For each index in the subsequence, at least one cylinder has $|\{R_n>\gamma\}\cap Q_i|\ge m/(2N)$. Pass to a further subsequence for which this cylinder is fixed, and write $\Gamma_n=\{R_n>\gamma\}\cap Q'$. Compare $z_n$ with the zero-trace solution of $P_{A_n}v_n=\gamma\mathbf1_{\Gamma_n}$ using Lemma~\ref{lem:positive}. Lemma~\ref{lem:growth} gives $\norm{z_n}_2\ge\norm{v_n}_2\ge c_*\gamma$, contradicting \eqref{eq:corrected}. This proves convergence in measure on the whole cylinder.

The last passage to strong $L^2$ convergence can also be seen directly. For $M>0$,
\[
 \int_{\{R_n>M\}}R_n^2\le M^{2-p_*}M_R^{p_*}.
\]
On $\{R_n\le M\}$, convergence in measure and the boundedness by $M$ give convergence to zero of the integral of $R_n^2$. Let $n\to\infty$ and then $M\to\infty$. The strict inequality $p_*>2$ is essential in this step.
\end{proof}

\begin{lemma}[Linearization and stability]\label{lem:secant}
For $v,w\in\mathcal X_2$, there is a measurable linear operator $\widehat L$ in the convexly enlarged coefficient class of Lemma~\ref{lem:inverse} such that
\begin{equation}\label{eq:secant}
 F(v)-F(w)=\widehat L(v-w).
\end{equation}
In particular,
\begin{equation}\label{eq:nonlinearstability}
 \norm{v-w}_{X_2}\le C\norm{F(v)-F(w)}_2.
\end{equation}
The constant depends only on the common linear bounds.
\end{lemma}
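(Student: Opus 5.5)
We prove \eqref{eq:secant} by a measurable mean-value construction on the Hamiltonian. Write $\zeta=v-w$, so $F(v)-F(w)=\zeta_\tau-\bigl(H(Jv)-H(Jw)\bigr)$. The aim is to show that the Hamiltonian difference equals $\widehat\ell\,\zeta=\widehat A:D^2\zeta+\widehat b\cdot D\zeta-\widehat c\,\zeta$ almost everywhere, with coefficients in the convex hull of the admissible ones.

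Fix measurable maximizers $\alpha_v\in\operatorname{arg\,max}h(Jv,\cdot)$ and $\alpha_w\in\operatorname{arg\,max}h(Jw,\cdot)$, chosen by the measurable maximum theorem as in Lemma~\ref{lem:iterbounds}. Since $h$ is affine in $z$ with linear part $\ell^\alpha$, pointwise we have
\[
 \ell^{\alpha_w}\zeta\le H(Jv)-H(Jw)\le\ell^{\alpha_v}\zeta .
\]
The lower bound follows from $H(Jv)\ge h(Jv,\alpha_w)$, and the upper bound from $H(Jw)\ge h(Jw,\alpha_v)$.

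\begin{itemize}
\item On the set where $\ell^{\alpha_v}\zeta>\ell^{\alpha_w}\zeta$, define the weight
\[
 \lambda=\frac{H(Jv)-H(Jw)-\ell^{\alpha_w}\zeta}{\ell^{\alpha_v}\zeta-\ell^{\alpha_w}\zeta}\in[0,1].
\]
\item On the complementary set, where the two bounds coincide, put $\lambda=0$.
\end{itemize}
The function $\lambda$ is measurable. Define
\[
 (\widehat A,\widehat b,\widehat c)=\lambda(A^{\alpha_v},b^{\alpha_v},c^{\alpha_v})+(1-\lambda)(A^{\alpha_w},b^{\alpha_w},c^{\alpha_w}).
\]
Then $H(Jv)-H(Jw)=\widehat\ell\,\zeta$ almost everywhere, and \eqref{eq:secant} holds with $\widehat L=\partial_\tau-\widehat\ell$.

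It remains to check that $\widehat L$ lies in the enlarged class. This is what the remark after Assumption~\ref{ass:coeff} guarantees: pointwise convex combinations preserve both structural classes. In case \eqref{eq:shape}, $\widehat A=(\lambda a^{\alpha_v}+(1-\lambda)a^{\alpha_w})S$ with the same bounds $\underline a,\overline a$. In case \eqref{eq:perturb}, the scalar part stays in $[\underline a,\overline a]$, and the Frobenius norm of $\lambda E^{\alpha_v}+(1-\lambda)E^{\alpha_w}$ is at most $e_0$ by convexity of the norm. In both cases $\widehat b$ is bounded and $0\le\widehat c\le C_0$.

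For \eqref{eq:nonlinearstability}, note that $v$ and $w$ lie in the same affine space, so $\zeta\in X_2^0$. Applying Lemma~\ref{lem:inverse} with $p=2$ to $\widehat L$ gives
\[
 \norm{\zeta}_{X_2}\le C_2\norm{\widehat L\zeta}_2=C_2\norm{F(v)-F(w)}_2 .
\]
The constant $C_2$ depends only on the common bounds, because Lemma~\ref{lem:inverse} was stated for arbitrary measurable mixtures.

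The only delicate point is that the coefficients are built from jets that are defined only almost everywhere. As in Lemma~\ref{lem:iterbounds}, we fix measurable representatives of the jets, so the construction holds off a null set. No continuity of $\alpha_v$ or $\alpha_w$ is ever needed. We expect no genuine obstacle beyond this bookkeeping and the check of class preservation.
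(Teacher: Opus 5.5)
Your proposal is correct and follows the paper's proof essentially step for step: the same measurable maximizers $\alpha_v,\alpha_w$, the same sandwich $\ell^{\alpha_w}(v-w)\le H(Jv)-H(Jw)\le\ell^{\alpha_v}(v-w)$, the same measurable weight applied to the complete coefficient triples, and the same appeal to Lemma~\ref{lem:inverse} at $p=2$. Your explicit checks that both structural classes survive the convex combination and that $v-w\in X_2^0$ spell out what the paper states in one line.
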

\begin{proof}
Choose maximizing controls $\alpha_v,\alpha_w$ at the two jets. Comparing each maximum with the other selected control gives
\begin{equation}\label{eq:secantbounds}
 \ell^{\alpha_w}(v-w)\le H(Jv)-H(Jw)
                       \le\ell^{\alpha_v}(v-w).
\end{equation}
Set $q_-=\ell^{\alpha_w}(v-w)$, $q_+=\ell^{\alpha_v}(v-w)$, and $q=H(Jv)-H(Jw)$. Define the measurable weight
\[
 \theta=
 \begin{cases}(q-q_-)/(q_+-q_-),&q_+>q_-,\\0,&q_+=q_-.\end{cases}
\]
Then $0\le\theta\le1$ and $q=\theta q_++(1-\theta)q_-$. Use this same weight for the complete coefficient triples:
\[
 (\widehat A,\widehat b,\widehat c)
 =\theta(A^{\alpha_v},b^{\alpha_v},c^{\alpha_v})
       +(1-\theta)(A^{\alpha_w},b^{\alpha_w},c^{\alpha_w}).
\]
The resulting $\widehat L=\partial_\tau-\widehat A:D^2-\widehat b\cdot D+\widehat c$ satisfies \eqref{eq:secant}. The structural bounds are preserved even though the averaged coefficients need not correspond to a pure policy. Lemma~\ref{lem:inverse} now gives \eqref{eq:nonlinearstability}.
\end{proof}

\begin{proof}[Proof of Theorem~\ref{thm:main}]
Lemma~\ref{lem:iterbounds} gives existence, uniform bounds, and monotonicity from the first evaluation. Dominated convergence shows that $u^n$ converges in $L^2$, so $\delta_n=u^{n+1}-u^n\to0$ in $L^2$. The uniform $X_{p_*}$ bound also bounds $\delta_n$ in $X_2$. Lemma~\ref{lem:residual}, applied to \eqref{eq:increment}, gives $R_n\to0$ in measure and in $L^2$. Lemma~\ref{lem:secant} now yields
\[
 \norm{u^n-u^m}_{X_2}\le C\norm{R_n-R_m}_2\longrightarrow0.
\]
Thus the sequence has a limit $u\in\mathcal X_2$. Global Lipschitz continuity of the Hamiltonian in the jet and $R_n\to0$ show that $F(u)=0$ in $L^2$. The uniform $X_{p_*}$ bound gives the same limit in $X_{p_*}$ by weak compactness and uniqueness of the distributional limit. The $L^\infty$ bound passes to the limit. Applying \eqref{eq:nonlinearstability} to two solutions gives uniqueness in $\mathcal X_2$, and applying it to $u^n,u$ proves \eqref{eq:posterior}. No optimal classical solution was introduced in this argument.
\end{proof}

\section{Entropy regularization}\label{sec:entropy}
We now extend the preceding argument to relative-entropy regularization. The additional estimate needed for convergence at fixed temperature is a bound on the entropy penalty of the improving policy. For related exploratory HJB equations and finite-horizon policy improvement with uncontrolled diffusion, see \cite{TangZhangZhou2022,TangZhou2024}.

\subsection{The regularized model and convergence theorem}
Retain the coefficients, domain, and traces of Section~\ref{sec:setting}. Write $H_0=H$, $F_0=F$, and denote the unregularized solution by $u_0$. Fix a full-support probability measure $\mu$ on $U$. For $\lambda>0$, set
\begin{equation}\label{eq:hentropy}
 H_\lambda(\tau,x,z)=\lambda\log\int_U e^{h(\tau,x,z,\alpha)/\lambda}\,\mu(\dd\alpha),
\end{equation}
and consider
\begin{equation}\label{eq:entropyHJB}
 F_\lambda(u):=u_\tau-H_\lambda(Ju)=0,\qquad u\in\mathcal X_2.
\end{equation}
The normalization of $\mu$ fixes the relative-entropy convention. An unnormalized measure would add a running reward constant, which can affect a controlled exit problem.

\begin{assumption}[Actions for positive temperature]\label{ass:actions}
For $\lambda>0$ assume either that $U$ is finite with $\mu(\{\alpha\})>0$ for every action, or that $U\subset\R^m$ is compact, the coefficients in \eqref{eq:h} are uniformly Lipschitz in the control variable, and
\begin{equation}\label{eq:mass}
 \mu(B_r(\alpha)\cap U)\ge c_\mu r^m\quad
 (\alpha\in U,\ 0<r\le r_\mu)
\end{equation}
for some positive constants. Assumption~\ref{ass:actions} is not needed for $\lambda=0$.
\end{assumption}

For a measurable probability kernel $\pi$ on $U$, use superscript $\pi$ for averaged coefficients and write
\[
 \ell^\pi w=A^\pi:D^2w+b^\pi\cdot Dw-c^\pi w,\qquad
 L^\pi=\partial_\tau-\ell^\pi.
\]
For positive temperature let $k_\lambda(\pi)=\lambda\KL(\pi\|\mu)$, where $\KL(\pi\|\mu)=\int\log(\dd\pi/\dd\mu)\dd\pi$ for $\pi\ll\mu$ and is $+\infty$ otherwise. At zero temperature we optimize over pure policies and set $k_0=0$.

For every $\lambda>0$, use the same initial guess $u^0_\lambda=u^0$. The improved policy is the Gibbs kernel
\begin{equation}\label{eq:gibbs}
 \frac{\dd\pi_{n,\lambda}}{\dd\mu}(\alpha)
 =\frac{\exp(h(Ju^n_\lambda,\alpha)/\lambda)}
        {\int_U\exp(h(Ju^n_\lambda,\beta)/\lambda)\,\mu(\dd\beta)}.
\end{equation}
The next evaluation solves
\begin{equation}\label{eq:entropyHoward}
 L^{\pi_{n,\lambda}}u^{n+1}_\lambda
 =f^{\pi_{n,\lambda}}-k_\lambda(\pi_{n,\lambda}),
 \qquad u^{n+1}_\lambda\in\mathcal X_{p_*}.
\end{equation}
For fixed $\lambda$, we suppress it in $u^n_\lambda$ and $\pi_{n,\lambda}$. At $\lambda=0$, interpret $\pi_{n,0}=\delta_{\alpha_n}$ and recover Section~\ref{sec:setting}.

\begin{theorem}[Entropy-regularized Howard iteration]\label{thm:entropy}
Under Assumptions~\ref{ass:coeff} and \ref{ass:actions}, for each $\lambda>0$ the conclusions of Theorem~\ref{thm:main} hold for \eqref{eq:entropyHJB} and \eqref{eq:entropyHoward}, with residual
$R_{n,\lambda}=H_\lambda(Ju^n_\lambda)-\partial_\tau u^n_\lambda$.
In particular, the iterates converge strongly in $X_2$ to the unique strong solution $u_\lambda$. The bounds on the iterates and residuals and the a posteriori error constant are uniform for $0\le\lambda\le\overline\lambda<\infty$.
\end{theorem}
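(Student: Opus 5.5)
We follow the proof of Theorem~\ref{thm:main} step by step. The only new ingredients are the entropy cost $k_\lambda(\pi_n)$ in the evaluation equation and the soft maximum $H_\lambda$ in place of $H$. The basic tool is the Gibbs variational identity
\[
 H_\lambda(z)=\sup_{\pi\ll\mu}\Bigl(\int_U h(z,\alpha)\,\pi(\dd\alpha)-\lambda\KL(\pi\|\mu)\Bigr),
\]
whose supremum is attained exactly at the Gibbs kernel \eqref{eq:gibbs}. This identity replaces the pointwise maximization used in Lemma~\ref{lem:iterbounds}.

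\textbf{Improvement step.} Repeating the computation of Lemma~\ref{lem:iterbounds} gives
\[
 u^n_\tau=\ell^{\pi_{n-1}}u^n+f^{\pi_{n-1}}-k_\lambda(\pi_{n-1})\le H_\lambda(Ju^n)=\ell^{\pi_n}u^n+f^{\pi_n}-k_\lambda(\pi_n).
\]
Hence $R_{n,\lambda}\ge0$ and $L^{\pi_n}(u^{n+1}-u^n)=R_{n,\lambda}$. The averaged coefficients $A^{\pi}=\int A^\alpha\,\pi(\dd\alpha)$ stay in the convex class of Assumption~\ref{ass:coeff}, so Lemmas~\ref{lem:inverse} and \ref{lem:positive} apply with the same $p_*$ and constants. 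Monotonicity follows as before.

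\textbf{Uniform bounds.} This is the main obstacle. We need bounds on $k_\lambda(\pi_n)$ in $L^{p_*}$ and $L^\infty$ that are uniform in $n$ and in $\lambda\le\overline\lambda$.

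Write $\bar h=\max_\alpha h(Ju^n,\alpha)=H_0(Ju^n)$. Two elementary inequalities follow from the identity and from $\KL\ge0$:
\[
 H_0(z)-\lambda\,\Phi(z)\le H_\lambda(z)\le H_0(z),
\qquad
 0\le k_\lambda(\pi_n)=\int h\,\dd\pi_n-H_\lambda\le H_0-H_\lambda\le\lambda\Phi,
\]
where $\Phi(z)=-\log\mu\{\alpha:h(z,\alpha)\ge H_0(z)-\lambda\}-1$.

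To bound $\Phi$ we use Assumption~\ref{ass:actions}.
\begin{itemize}
\item In the finite case, $\Phi\le\log(1/\min_\alpha\mu(\{\alpha\}))$ is constant.
\item In the compact case, $h$ is Lipschitz in $\alpha$ with constant $C(1+|z|)$. So $\{h\ge\bar h-\lambda\}$ contains a ball of radius $r=\min\{r_\mu,\lambda/(C(1+|z|))\}$, and \eqref{eq:mass} gives $\Phi\le C+m\log((1+|z|)/\lambda)$.
\end{itemize}
Consequently $0\le k_\lambda(\pi_n)\le C\lambda\bigl(1+\log^+(1/\lambda)+\log(1+|Ju^n|)\bigr)$. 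Since $\lambda\log(1/\lambda)$ is bounded on $(0,\overline\lambda]$, this is at most $C(1+|Ju^n|)$ uniformly in $\lambda$.

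The source term $f^{\pi_n}-k_\lambda$ is therefore bounded in $L^{p_*}$ by $C(1+\|u^n\|_{X_{p_*}})$. This is linear growth, and the induction in Lemma~\ref{lem:iterbounds} alone does not close. To close it we use monotonicity instead. With $\delta_n=u^{n+1}-u^n\ge0$ and $L^{\pi_n}\delta_n=R_{n,\lambda}$, we bound $u^{n+1}$ from above by the unregularized value. Since $H_\lambda\le H_0$ and $-k_\lambda\le0$, the function $u^{n+1}$ is a subsolution of the $\lambda=0$ linear problems. The upper barrier $\overline v$ of Lemma~\ref{lem:iterbounds} still works, because $f^\pi-k\le M_f$. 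For the lower barrier, $\underline v$ works with $M_f+\sup k$ on $u^1$ only; from then on $u^{n}\ge u^1$. This gives a uniform $L^\infty$ bound.

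For the Sobolev bound, we would use the logarithmic (sublinear) growth of $k$ in $|D^2u|$ together with interpolation. Given $\|k\|_{p}\le C+C\log(1+\|Ju^n\|_{p})$ via Jensen, the recursion $\|u^{n+1}\|\le C+C\log(1+\|u^n\|)$ stays bounded. That is the route we would try first.

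\textbf{Convergence.} With these bounds, Lemma~\ref{lem:residual} applies verbatim and gives $R_{n,\lambda}\to0$ in $L^2$. For the linearization of Lemma~\ref{lem:secant}, $H_\lambda$ is convex and $C^1$ in $z$, with gradient given by the Gibbs average of the coefficients. The mean value theorem therefore produces $\widehat L$ with coefficients $\int_0^1(A^{\pi_s},b^{\pi_s},c^{\pi_s})\,\dd s$, which lie in the convex class. Stability \eqref{eq:nonlinearstability}, the Cauchy argument, uniqueness and \eqref{eq:posterior} then follow exactly as in the proof of Theorem~\ref{thm:main}. All constants depend only on the common linear bounds and on the $\lambda$-uniform estimates above, which gives uniformity for $0\le\lambda\le\overline\lambda$.
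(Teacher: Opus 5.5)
Your proposal is correct and follows the paper's proof in all essentials. The Gibbs variational formula gives $R_{n,\lambda}\ge0$ and monotonicity. The logarithmic growth of the entropy penalty closes the uniform $X_{p_*}$ induction; you do this by a Jensen-based sublinear recursion, where the paper uses the pointwise absorption $\log(1+s)\le\varepsilon s+C_\varepsilon$. Barriers with a bounded first penalty, which follows from smoothness of $u^0$, give the $L^\infty$ bounds. A linearization in the convex coefficient class then feeds Lemma~\ref{lem:residual} and the Cauchy argument.

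The only other local difference is how you build $\widehat L$. You use the mean value theorem, which relies on $C^1$ smoothness of $H_\lambda$ and so needs $\lambda>0$. The paper instead reuses the two-sided variational bounds with the measurable weight of Lemma~\ref{lem:secant}, which works verbatim down to $\lambda=0$. The slip of $-1$ for $+1$ in your $\Phi$ is harmless.
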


\subsection{The additional entropy estimate}
The Gibbs variational formula gives
\begin{equation}\label{eq:variational}
 H_\lambda(z)=\sup_\pi\left\{\int_U h(z,\alpha)\,\pi(\dd\alpha)
                         -\lambda\KL(\pi\|\mu)\right\}.
\end{equation}
The maximum is attained by \eqref{eq:gibbs}. At zero temperature the corresponding formula is the maximum over pure actions. Both Hamiltonians are convex in $z$ and globally Lipschitz in $z$, with a common Lipschitz constant independent of $\lambda$. Since $\mu$ is a probability measure,
\begin{equation}\label{eq:linearH}
 \min_\alpha h(z,\alpha)\le H_\lambda(z)\le H_0(z),\qquad
 |H_\lambda(z)|\le C(1+|z|).
\end{equation}

\begin{lemma}[Bound on the Gibbs entropy penalty]\label{lem:entropy}
Let $\pi_\lambda(z)$ be the Gibbs policy. For finite $U$,
\begin{equation}\label{eq:finiteentropy}
 0\le k_\lambda(\pi_\lambda(z))\le H_0(z)-H_\lambda(z)
 \le\lambda\log(1/\mu_{\min}).
\end{equation}
Under \eqref{eq:mass}, for $0<\lambda\le\overline\lambda$,
\begin{equation}\label{eq:logentropy}
 0\le k_\lambda(\pi_\lambda(z))\le H_0(z)-H_\lambda(z)
 \le C\lambda\left[1+\log\left(1+\frac{1+|z|}{\lambda}\right)\right].
\end{equation}
In particular, for each $\varepsilon>0$,
\begin{equation}\label{eq:absorb}
 k_\lambda(\pi_\lambda(z))\le\varepsilon|z|+C_{\varepsilon,\overline\lambda}.
\end{equation}
All estimates are uniform in $(\tau,x)$.
\end{lemma}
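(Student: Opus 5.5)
We prove Lemma~\ref{lem:entropy} from the Gibbs variational formula \eqref{eq:variational}, evaluated once at the Gibbs kernel and once at a competitor measure. First, $k_\lambda\ge0$ because relative entropy is nonnegative. Since $\pi_\lambda(z)$ attains the supremum in \eqref{eq:variational},
\[
 H_\lambda(z)=\int_U h(z,\alpha)\,\pi_\lambda(\dd\alpha)-k_\lambda(\pi_\lambda(z))
 \le H_0(z)-k_\lambda(\pi_\lambda(z)),
\]
because the $\pi_\lambda$-average of $h$ is at most its maximum $H_0(z)$. Rearranging gives $k_\lambda(\pi_\lambda(z))\le H_0(z)-H_\lambda(z)$, which is the middle inequality in both \eqref{eq:finiteentropy} and \eqref{eq:logentropy}. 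It remains to bound $H_0-H_\lambda$ from above.

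In the finite case, let $\alpha^*$ be a maximizer of $h(z,\cdot)$. Keeping only the atom at $\alpha^*$ in \eqref{eq:hentropy} gives
\[
 H_\lambda(z)\ge\lambda\log\bigl(\mu(\{\alpha^*\})e^{H_0(z)/\lambda}\bigr)
 \ge H_0(z)-\lambda\log(1/\mu_{\min}).
\]

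In the continuous case under \eqref{eq:mass}, we use the Lipschitz dependence on the control. Since $h$ is affine in $z$ with coefficients uniformly Lipschitz in $\alpha$, there is a constant $C_1$ with $|h(z,\alpha)-h(z,\alpha')|\le C_1(1+|z|)|\alpha-\alpha'|$. For $\alpha\in B_r(\alpha^*)\cap U$ this gives $h(z,\alpha)\ge H_0(z)-C_1(1+|z|)r$. Restricting the integral in \eqref{eq:hentropy} to this ball and using \eqref{eq:mass},
\[
 H_\lambda(z)\ge H_0(z)-C_1(1+|z|)r-\lambda\log(1/c_\mu)-m\lambda\log(1/r).
\]
Choose
\[
 r=\min\Bigl\{r_\mu,\ \frac{\lambda}{1+|z|}\Bigr\}.
\]
Then the first loss term is at most $C_1\lambda$, and $\log(1/r)\le\log(1/r_\mu)+\log\bigl((1+|z|)/\lambda\bigr)$, which is at most a constant plus $\log\bigl(1+(1+|z|)/\lambda\bigr)$. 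Collecting constants gives \eqref{eq:logentropy}. This choice of $r$ is the only delicate step: it must balance the Lipschitz loss, of order $(1+|z|)r$, against the small-ball mass loss, of order $\lambda\log(1/r)$, and it must stay admissible, i.e.\ $r\le r_\mu$.

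Finally, \eqref{eq:absorb} follows from \eqref{eq:logentropy} by an elementary growth argument. Fix $\varepsilon>0$ and $0<\lambda\le\overline\lambda$. The function $\lambda\log\bigl(1+(1+|z|)/\lambda\bigr)$ is at most $\lambda\log(1+1/\lambda)+\lambda\log(1+|z|)$. The first term is bounded uniformly in $\lambda\in(0,\overline\lambda]$, since $\lambda\log(1+1/\lambda)\to0$ as $\lambda\to0$. For the second term, $C\overline\lambda\log(1+|z|)\le\varepsilon|z|+C_{\varepsilon,\overline\lambda}$ because the logarithm grows sublinearly. The finite case is immediate, since there the bound does not depend on $z$. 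All constants depend only on the uniform coefficient bounds, $c_\mu$, $r_\mu$, $m$, and $\overline\lambda$, so the estimates are uniform in $(\tau,x)$.
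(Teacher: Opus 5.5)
Your proof is correct and follows essentially the same route as the paper: the variational identity at the Gibbs kernel gives the middle inequality. You then retain the maximizing atom in the finite case, or a ball of radius $r=\min\{r_\mu,\lambda/(1+|z|)\}$ around the maximizer in the continuous case, and use sublinear growth of the logarithm for \eqref{eq:absorb}, where your splitting $\log(1+(1+s)/\lambda)\le\log(1+1/\lambda)+\log(1+s)$ replaces the paper's monotonicity-in-$\lambda$ argument.

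One cosmetic slip: since $1/r=\max\{1/r_\mu,(1+|z|)/\lambda\}$, your intermediate sum bound on $\log(1/r)$ can fail when one of the logarithms is negative (e.g.\ $\lambda>1+|z|$ with $r_\mu\le 1$). The conclusion you actually use, $\log(1/r)\le|\log r_\mu|+\log\bigl(1+(1+|z|)/\lambda\bigr)$, nevertheless holds directly.
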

\begin{proof}
The identity $k_\lambda(\pi_\lambda(z))=\int h(z,\alpha)\dd\pi_\lambda-H_\lambda(z)$ gives the first inequality by $\int h\dd\pi_\lambda\le H_0$. For finite $U$, retain in the partition function an action attaining $H_0$. For continuous $U$, let $\alpha_*$ maximize $h$ and choose $C>0$ so that $L_z=C(1+|z|)$ bounds its Lipschitz constant in the control variable. Integrating on $B_r(\alpha_*)\cap U$ gives
\[
 H_\lambda(z)\ge H_0(z)-L_zr+\lambda\log(c_\mu r^m).
\]
Choose $r=\min\{r_\mu,\lambda/L_z\}$, adjusting fixed constants if necessary. This proves \eqref{eq:logentropy}. To make the uniformity in temperature explicit, for $s\ge0$,
\[
 \sup_{0<\lambda\le\overline\lambda}
 \lambda\log\left(1+\frac{1+s}{\lambda}\right)
 \le C_{\overline\lambda}\bigl(1+\log(1+s)\bigr).
\]
Indeed the expression inside the supremum is increasing in $\lambda$, and its value at $\overline\lambda$ has the displayed bound. Since $\log(1+s)\le\varepsilon s+C_\varepsilon$, rescaling $\varepsilon$ proves \eqref{eq:absorb}.
\end{proof}

\begin{proof}[Proof of Theorem~\ref{thm:entropy}]
The Gibbs formula defines a measurable kernel and preserves the coefficient bounds under averaging. Lemmas~\ref{lem:inverse} and \ref{lem:entropy}, after subtracting $\widetilde g$, give
\[
 \norm{u^{n+1}_\lambda}_{X_{p_*}}
 \le C\bigl(1+C_{\varepsilon,\overline\lambda}
              +\varepsilon\norm{u^n_\lambda}_{X_{p_*}}\bigr).
\]
Choose $\varepsilon$ so that $C\varepsilon<1/2$. Induction proves existence of every evaluation and a bound uniform in $n$ and $0\le\lambda\le\overline\lambda$. For finite actions, the bounded penalty in \eqref{eq:finiteentropy} gives this directly.

For $n\ge1$, optimality of $\pi_n$ and evaluation of $\pi_{n-1}$ give
\[
 H_\lambda(Ju^n)
 =\ell^{\pi_n}u^n+f^{\pi_n}-k_\lambda(\pi_n)
 \ge\ell^{\pi_{n-1}}u^n+f^{\pi_{n-1}}-k_\lambda(\pi_{n-1})
 =u^n_\tau.
\]
Thus $R_{n,\lambda}\ge0$ and
$L^{\pi_n}(u^{n+1}-u^n)=R_{n,\lambda}$, so the iterates are monotone from $n=1$.
Since the penalty is nonnegative, the upper barrier
$\norm{g}_\infty+M_f\tau$ still applies. Boundedness of $Ju^0$ and Lemma~\ref{lem:entropy} give a common lower bound $-K_0$ on the first source
$f^{\pi_0}-k_\lambda(\pi_0)$. Comparison with
$-\norm{g}_\infty-K_0\tau$ bounds $u^1$ below, and monotonicity bounds every later iterate below. Equation \eqref{eq:linearH} then gives the uniform $L^{p_*}$ residual bound.

The stability argument is unchanged. For any $v,w\in\mathcal X_2$, let $\pi_v,\pi_w$ be their Gibbs policies. The variational formula gives
\[
 \ell^{\pi_w}(v-w)
 \le H_\lambda(Jv)-H_\lambda(Jw)
 \le\ell^{\pi_v}(v-w).
\]
The same measurable weight used in Lemma~\ref{lem:secant} represents this difference by a convex combination of the two complete coefficient triples. Lemma~\ref{lem:inverse} therefore gives \eqref{eq:nonlinearstability} with $F$ replaced by $F_\lambda$, with a constant independent of $\lambda$.

For each fixed temperature, bounded monotonicity implies $u^{n+1}-u^n\to0$ in $L^2$. Lemma~\ref{lem:residual} now gives convergence of $R_{n,\lambda}$ in measure and in $L^2$. The last paragraph of the proof of Theorem~\ref{thm:main}, using the preceding stability estimate, gives convergence, existence, uniqueness, and the a posteriori bound. No spatial derivatives of the Gibbs policy have been used.
\end{proof}

\subsection{Policy convergence and temperature limits}\label{sec:temperature}
\begin{theorem}[Temperature limits]\label{thm:temperature}
Assume both Assumptions~\ref{ass:coeff} and \ref{ass:actions}, and use the same smooth initial guess at every temperature. The bounds in Theorem~\ref{thm:entropy} are uniform for $0\le\lambda\le\overline\lambda<\infty$, and
\begin{equation}\label{eq:uniformiteration}
 \lim_{n\to\infty}\ \sup_{0\le\lambda\le\overline\lambda}
 \norm{u^n_\lambda-u_\lambda}_{X_2}=0.
\end{equation}
For $0<\lambda\le1$,
\begin{equation}\label{eq:zerotemp}
 \norm{u_\lambda-u_0}_{X_2}\le
 \begin{cases}
 C\lambda\log(1/\mu_{\min}),& U\text{ finite},\\
 C\lambda(1+|\log\lambda|),&\text{under \eqref{eq:mass}},
 \end{cases}
\end{equation}
where $\mu_{\min}$ is the smallest action mass. Here $u_0$ denotes the zero-temperature solution, whereas $u^0$ is the initial guess. Consequently $u^n_{\lambda_n}\to u_0$ in $X_2$ whenever $n\to\infty$ and $\lambda_n\to0$. This diagonal limit is taken across the family of fixed-temperature schemes. The uniform convergence assertion is qualitative, not a temperature-uniform geometric rate.
\end{theorem}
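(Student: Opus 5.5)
The zero-temperature estimate \eqref{eq:zerotemp} comes first, because it is a direct consequence of the temperature-uniform stability estimate. I will apply the $F_\lambda$ version of \eqref{eq:nonlinearstability}, whose constant $C$ does not depend on $\lambda$, with $v=u_0$ and $w=u_\lambda$. Since $F_\lambda(u_\lambda)=0$ and $F_0(u_0)=0$,
\[
 F_\lambda(u_0)-F_\lambda(u_\lambda)=F_\lambda(u_0)=H_0(Ju_0)-H_\lambda(Ju_0),
\]
and therefore $\norm{u_\lambda-u_0}_{X_2}\le C\norm{H_0(Ju_0)-H_\lambda(Ju_0)}_2$. For finite $U$, Lemma~\ref{lem:entropy} gives the pointwise bound $\lambda\log(1/\mu_{\min})$. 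Under \eqref{eq:mass} it gives $C\lambda[1+\log(1+(1+|Ju_0|)/\lambda)]$. For $\lambda\le1$ this is at most $C\lambda(1+|\log\lambda|+\log(1+|Ju_0|))$. Its $L^2$ norm is finite because $Ju_0\in L^2$ and $\log(1+s)\le s$, and the resulting constant depends only on the fixed solution $u_0$. This proves \eqref{eq:zerotemp}.

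The uniform statement \eqref{eq:uniformiteration} is the real content. By the a posteriori bound, which Theorem~\ref{thm:entropy} gives uniformly in $\lambda$, it suffices to show $\sup_{\lambda\in[0,\overline\lambda]}\norm{R_{n,\lambda}}_2\to0$. I will argue by contradiction. Suppose there are $\epsilon>0$, $n_k\to\infty$ and $\lambda_k\in[0,\overline\lambda]$ with $\norm{R_{n_k,\lambda_k}}_2\ge\epsilon$. Lemma~\ref{lem:residual} needs $\norm{\delta_k}_2\to0$, where $\delta_k=u^{n_k+1}_{\lambda_k}-u^{n_k}_{\lambda_k}$. I will obtain this from a quantitative monotone telescoping that is uniform in $\lambda$. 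The iterates are increasing in $n$ from $n=1$ and bounded in $L^\infty$ uniformly in $\lambda$, so
\[
 \sum_{n\ge1}\int_Q\delta_{n,\lambda}=\int_Q(u_\lambda-u^1_\lambda)\le 2M_\infty|Q|
\]
for every $\lambda$. Hence $\min_{n\le N}\norm{\delta_{n,\lambda}}_1\le C/N$ uniformly in $\lambda$. Interpolating with the uniform $L^\infty$ bound gives the same for $L^2$ norms. This controls only the best index up to $N$, not every index. To get every index I will use monotonicity of the residual. Comparison applied to $L^{\pi_n}(u^{n+1}-u)=\ldots$ is awkward, so I will instead use the a posteriori bound in the other direction. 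Since $u^n_\lambda\le u^{n+1}_\lambda\le u_\lambda$, the error $\norm{u_\lambda-u^n_\lambda}_{L^1}$ is nonincreasing in $n$. So it suffices to show $\sup_\lambda\norm{u_\lambda-u^n_\lambda}_1\to0$, and the $X_{p_*}$ bound together with interpolation then upgrades $L^1$ convergence to $X_2$ convergence. For the $L^1$ statement I will use the contradiction sequence $\lambda_k$: by the telescoping bound, along it there is some index $m_k\le n_k$, with $m_k\to\infty$, at which $\norm{\delta_{m_k,\lambda_k}}_2\to0$.

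Apply Lemma~\ref{lem:residual} to this sequence, with $L_k=L^{\pi_{m_k,\lambda_k}}$, which have common structural bounds and a uniform $L^{p_*}$ residual bound. It gives $R_{m_k,\lambda_k}\to0$ in $L^2$, hence $\norm{u_{\lambda_k}-u^{m_k}_{\lambda_k}}_{X_2}\to0$. By the monotonicity of the $L^1$ error in $n$, $\norm{u_{\lambda_k}-u^{n_k}_{\lambda_k}}_1\to0$ as well. The $X_{p_*}$ bound then gives the following: the sequence is bounded in $X_{p_*}$, so $R_{n_k,\lambda_k}$ is bounded in $L^{p_*}$. Writing $F_{\lambda_k}(u^{n_k})$ through Lemma~\ref{lem:secant} requires $X_2$ rather than $L^1$ convergence. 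Here I would pass to a subsequence with $\lambda_k\to\lambda_\infty$ and use compactness in $X_2$ weak topology and the convex monotone structure. I expect this step, the passage from $L^1$ or best-index convergence to the full residual at index $n_k$, to be the main obstacle. First I would try showing $\norm{R_{n,\lambda}}$ nonincreasing-type control via the identity $R_{n+1}=H_\lambda(Ju^{n+1})-\ell^{\pi_n}u^{n+1}-f^{\pi_n}+k_\lambda(\pi_n)\ge0$. This identity is a one-sided comparison, so it does not immediately give monotonicity in $L^2$; if it fails I would fall back on Dini-type uniformity from continuity of $\lambda\mapsto u^n_\lambda$ on the compact interval $[0,\overline\lambda]$. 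Finally, the diagonal limit follows from the triangle inequality, using \eqref{eq:uniformiteration} and \eqref{eq:zerotemp}.
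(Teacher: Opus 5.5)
Your proof of \eqref{eq:zerotemp} is correct and is the paper's argument. Your deduction of the diagonal limit from the two estimates is also the paper's. The gap is in \eqref{eq:uniformiteration}, at exactly the step you flag. Along the contradiction sequence you reach $\norm{u_{\lambda_k}-u^{n_k}_{\lambda_k}}_1\to0$; pointwise domination by $u_{\lambda_k}-u^{m_k}_{\lambda_k}$ even gives this in $L^2$. But you never turn it into $\norm{R_{n_k,\lambda_k}}_2\to0$, so no contradiction is reached.

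Your earlier claim that the $X_{p_*}$ bound plus interpolation upgrades $L^1$ convergence to $X_2$ convergence is false. Interpolating between $L^1$ and $X_{p_*}$ controls only derivatives of order below two, never $D^2$ and $\partial_\tau$ in $L^2$. For a fixed smooth cutoff $\phi$, the functions $j^{-2}\sin(jx_1)\phi$ are bounded in $X_{p_*}$ and tend to zero uniformly, but their Hessians do not converge strongly. The reduction to $L^1$ is true here, but only through the residual mechanism below.

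Your fallbacks do not close the gap either:
\begin{itemize}
\item Weak $X_2$ compactness along $\lambda_k\to\lambda_\infty$ gives no strong convergence of residuals, and that strong convergence is exactly what must be proved.
\item The one-sided identity for $R_{n+1}$ gives no monotonicity, as you note yourself.
\item A Dini argument needs continuity of $\lambda\mapsto u^n_\lambda$ up to $\lambda=0$, which is not available. On tie sets of $h(Ju^n_0,\cdot)$ the Gibbs kernels keep the $\mu$-weights of the tied actions, whereas $u^{n+1}_0$ uses an arbitrary measurable selection.
\end{itemize}

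The missing step is short and uses only tools you already invoked. Since $u^{n_k+1}_{\lambda_k}\le u_{\lambda_k}$, the increment satisfies $0\le\delta_{n_k,\lambda_k}\le u_{\lambda_k}-u^{n_k}_{\lambda_k}\le u_{\lambda_k}-u^{m_k}_{\lambda_k}$. Hence $\norm{\delta_{n_k,\lambda_k}}_2\to0$. Now apply Lemma~\ref{lem:residual} a second time, at index $n_k$, to $L^{\pi_{n_k,\lambda_k}}\delta_{n_k,\lambda_k}=R_{n_k,\lambda_k}$. The coefficient bounds, the $X_2$ bound on the increments, and the $L^{p_*}$ residual bound are all uniform in $\lambda$. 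This gives $R_{n_k,\lambda_k}\to0$ in $L^2$, contradicting $\norm{R_{n_k,\lambda_k}}_2\ge\epsilon$.

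With this repair your argument is a sequential version of the paper's proof. The paper packages the same application of Lemma~\ref{lem:residual} along $(n,\lambda)$-sequences into the quantitative modulus of Lemma~\ref{lem:modulus}. It combines that modulus with telescoping to get $\sup_\lambda\norm{e_{n,\lambda}}_2\to0$ for $e_{n,\lambda}=u_\lambda-u^n_\lambda$. It then uses $0\le u^{n+1}_\lambda-u^n_\lambda\le e_{n,\lambda}$ and the contrapositive of the modulus to conclude $\sup_\lambda\norm{R_{n,\lambda}}_2\to0$.
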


\begin{proposition}[Entropy identity and policy convergence]\label{prop:KL}
For $\lambda>0$ and $n\ge1$,
\begin{equation}\label{eq:KLresidual}
 R_n=\lambda\KL(\pi_{n-1}\|\pi_n)\quad\text{a.e.}
\end{equation}
For each fixed positive $\lambda$, let $\pi_*=\pi_\lambda(Ju_\lambda)$. Then
\[
 \int_Q\norm{\pi_n-\pi_*}_{\TV}^2\dd\tau\dd x\longrightarrow0,
 \qquad
 \norm{\pi-\pi'}_{\TV}:=\frac12\int_U
 \left|\frac{\dd\pi}{\dd\mu}-\frac{\dd\pi'}{\dd\mu}\right|\dd\mu.
\]
\end{proposition}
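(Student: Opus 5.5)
The plan has two parts: an exact identity at the level of the Gibbs densities, then an $L^2$ convergence argument that feeds in Theorem~\ref{thm:entropy} and Pinsker's inequality.

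\emph{Step 1: the identity \eqref{eq:KLresidual}.} Fix $n\ge1$ and a point $(\tau,x)$ outside a common null set. Write $h_n(\alpha)=h(Ju^n,\alpha)$. From \eqref{eq:gibbs},
\[
 \lambda\log\frac{\dd\pi_n}{\dd\mu}=h_n-H_\lambda(Ju^n).
\]
The evaluation equation \eqref{eq:entropyHoward} for $u^n$ with policy $\pi_{n-1}$ gives $u^n_\tau=\int h_n\dd\pi_{n-1}-\lambda\KL(\pi_{n-1}\|\mu)$. Hence
\[
 R_n=H_\lambda(Ju^n)-\int h_n\dd\pi_{n-1}+\lambda\int\log\frac{\dd\pi_{n-1}}{\dd\mu}\dd\pi_{n-1}.
\]
Substituting $h_n=H_\lambda(Ju^n)+\lambda\log(\dd\pi_n/\dd\mu)$ and using $\pi_{n-1}(U)=1$ gives
\[
 R_n=\lambda\int\log\frac{\dd\pi_{n-1}}{\dd\pi_n}\dd\pi_{n-1}=\lambda\KL(\pi_{n-1}\|\pi_n).
\]
The step requires finiteness, which holds because $\dd\pi_n/\dd\mu$ is strictly positive and bounded for jets that are finite a.e. Measurability in $(\tau,x)$ follows from the Carath\'eodory structure.

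\emph{Step 2: policy convergence.} By Pinsker, $\norm{\pi_{n-1}-\pi_n}_{\TV}^2\le\frac12\KL(\pi_{n-1}\|\pi_n)=R_n/(2\lambda)$. Theorem~\ref{thm:entropy} gives $R_n\to0$ in $L^2(Q)$, hence in $L^1(Q)$, so $\int_Q\norm{\pi_{n-1}-\pi_n}_{\TV}^2\to0$. This controls only successive differences, so I will compare $\pi_n$ with $\pi_*$ directly instead. Both are Gibbs kernels of jets, and Theorem~\ref{thm:entropy} gives $Ju^n\to Ju_\lambda$ in $L^2$, in particular a.e. along any subsequence. Pointwise, the map $z\mapsto\pi_\lambda(z)$ is continuous into total variation: the densities $e^{(h(z,\alpha)-H_\lambda(z))/\lambda}$ depend continuously on $z$, uniformly in $\alpha$, since $h$ is affine in $z$ with bounded coefficients and $H_\lambda$ is Lipschitz. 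Scheff\'e's lemma, or uniform convergence of the densities, then gives $\norm{\pi_n-\pi_*}_{\TV}\to0$ a.e. along every subsequence with a.e. convergent jets. Since $\norm{\cdot}_{\TV}\le1$, dominated convergence gives $\int_Q\norm{\pi_n-\pi_*}_{\TV}^2\to0$ along that subsequence. The subsequence principle then upgrades this to the full sequence.

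\emph{Main obstacle.} The one point needing care is the continuity of $z\mapsto\pi_\lambda(z)$ in total variation. A Lipschitz bound is available: $|\log(\dd\pi_\lambda(z)/\dd\mu)-\log(\dd\pi_\lambda(z')/\dd\mu)|\le 2C|z-z'|/\lambda$, which gives $\norm{\pi_\lambda(z)-\pi_\lambda(z')}_{\TV}\le e^{2C|z-z'|/\lambda}-1$. Truncating at $1$ turns this into the pointwise estimate needed above. The same estimate even yields a quantitative rate via $\norm{Ju^n-Ju_\lambda}_2\le C\norm{R_n}_2$ from \eqref{eq:posterior}. Assumption~\ref{ass:actions} is not needed for this step.
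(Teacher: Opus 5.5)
Your proposal is correct and follows the paper's proof. The identity comes from integrating the new log-density $(h(Ju^n,\cdot)-H_\lambda(Ju^n))/\lambda$ against $\pi_{n-1}$ and using the previous evaluation equation. Policy convergence comes from a uniform Lipschitz-type bound on the Gibbs map in total variation, combined with the $X_2$ convergence of the jets from Theorem~\ref{thm:entropy}.

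The only differences are cosmetic. The paper differentiates the density along the segment to get $\norm{\pi_\lambda(z)-\pi_\lambda(z')}_{\TV}\le (C/\lambda)|z-z'|$ and concludes directly. Your main line instead uses pointwise continuity, dominated convergence and a subsequence argument. Your truncated bound $\min\{1,e^{2C|z-z'|/\lambda}-1\}$ recovers the same linear estimate.
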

\begin{proof}
The logarithm of the new density is $(h(Ju^n,\alpha)-H_\lambda(Ju^n))/\lambda$. Integrate it against $\pi_{n-1}$ and use the previous evaluation equation to obtain \eqref{eq:KLresidual}. The direction of the relative entropy is the old policy relative to the new policy. For two jets, differentiating the Gibbs density along the segment between them gives
\[
 \norm{\pi_\lambda(z)-\pi_\lambda(z')}_{\TV}
 \le (C/\lambda)|z-z'|.
\]
Indeed the derivative of the log density is the centered action score divided by $\lambda$, and the score difference is bounded uniformly by $C|z-z'|$. Integrating the bound on the density derivative proves the displayed inequality. Strong jet convergence now gives the claim. At zero temperature, ties can prevent convergence of the policies even when the values converge.
\end{proof}

\begin{lemma}[A uniform lower bound for value increments]\label{lem:modulus}
For every $\varepsilon>0$ there is $\omega(\varepsilon)>0$, independent of $n\ge1$ and $0\le\lambda\le\overline\lambda$, such that
\begin{equation}\label{eq:modulus}
 \norm{R_{n,\lambda}}_2\ge\varepsilon
 \quad\Longrightarrow\quad
 \norm{u^{n+1}_\lambda-u^n_\lambda}_1\ge\omega(\varepsilon).
\end{equation}
\end{lemma}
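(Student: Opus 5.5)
We argue by contradiction and reuse the convergence-in-measure mechanism of Lemma~\ref{lem:residual}, this time with all constants tracked uniformly. Suppose the claim fails for some $\varepsilon>0$. Then there are indices $n_k\ge1$ and temperatures $\lambda_k\in[0,\overline\lambda]$ such that, with $\delta_k=u^{n_k+1}_{\lambda_k}-u^{n_k}_{\lambda_k}$ and $R_k=R_{n_k,\lambda_k}$,
\[
 \norm{R_k}_2\ge\varepsilon,\qquad \norm{\delta_k}_1\to0 .
\]
By Theorem~\ref{thm:entropy} we have $L^{\pi_k}\delta_k=R_k\ge0$ with $\delta_k\in X_2^0$. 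Here $\pi_k=\pi_{n_k,\lambda_k}$, and the operators $L^{\pi_k}$ are averaged coefficients in the admissible class, so Lemmas~\ref{lem:inverse}, \ref{lem:positive} and \ref{lem:growth} apply uniformly. The bounds of Theorem~\ref{thm:entropy} hold uniformly in $n$ and $\lambda\le\overline\lambda$. They give $\sup_k\norm{\delta_k}_{X_{p_*}}<\infty$, $\sup_k\norm{\delta_k}_\infty<\infty$ and $\sup_k\norm{R_k}_{p_*}\le M_R$.

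The first step converts $L^1$ smallness into $L^2$ smallness. Since $\delta_k$ is bounded in $L^\infty$, we have $\norm{\delta_k}_2^2\le\norm{\delta_k}_\infty\norm{\delta_k}_1\to0$. The hypotheses of Lemma~\ref{lem:residual} are therefore satisfied by the sequence $(L^{\pi_k},\delta_k,R_k)$. That lemma only requires common coefficient bounds and structure; it does not require the operators to come from a single iteration or a fixed temperature. Its conclusion is that $R_k\to0$ in measure, and then, by the uniform $L^{p_*}$ bound, $R_k\to0$ in $L^2(Q)$. This contradicts $\norm{R_k}_2\ge\varepsilon$. Hence some $\omega(\varepsilon)>0$ works for all $n\ge1$ and all $\lambda\in[0,\overline\lambda]$.

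The main obstacle is checking that every constant used inside Lemma~\ref{lem:residual} is independent of $n$ and $\lambda$. This covers the growth constant $c_*$ of Lemma~\ref{lem:growth}, the inverse constant of Lemma~\ref{lem:inverse}, and the bound $M_R$. The first two depend only on ellipticity bounds and the structural data, and these are preserved under Gibbs averaging, including the fixed field $S$ in case~\eqref{eq:shape}. The bound $M_R$ requires the temperature-uniform estimate of Theorem~\ref{thm:entropy}. That estimate comes from absorbing the entropy penalty through \eqref{eq:absorb} with constants $C_{\varepsilon,\overline\lambda}$, together with the $\lambda$-uniform linear growth \eqref{eq:linearH} of $H_\lambda$. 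The $L^\infty$ bound on $\delta_k$ likewise comes from the $\lambda$-uniform upper barrier and from the lower bound $-K_0$ on $u^1_\lambda$, where $K_0$ is uniform in $\lambda$ by Lemma~\ref{lem:entropy}.

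One could instead avoid the $L^\infty$ bound by interpolating between $L^1$ and the $X_{p_*}$ bound, since $X_{p_*}$ embeds in a higher $L^q$. The $L^\infty$ route above is simpler, so we take it.
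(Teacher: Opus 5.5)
Your proof is correct, but it is organized differently from the paper's. You argue by contradiction. A failing $\varepsilon$ yields pairs $(n_k,\lambda_k)$ with $\norm{R_k}_2\ge\varepsilon$ and $\norm{\delta_k}_1\to0$. The uniform $L^\infty$ bound upgrades this to $\norm{\delta_k}_2\to0$, and you then apply Lemma~\ref{lem:residual} as a black box to the sequence $(L^{\pi_k},\delta_k,R_k)$. What makes this legitimate, as you correctly note, is that Lemma~\ref{lem:residual} is stated for arbitrary sequences of operators with common structural bounds. It therefore does not matter that your indices mix iterations and temperatures. You only need its hypotheses, and you check them: preservation of the structure under Gibbs averaging, and the temperature-uniform $X_{p_*}$, $L^\infty$ and $M_R$ bounds.

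The paper instead proves the implication directly. H\"older's inequality with the $L^{p_*}$ bound gives an explicit lower mass $m_\varepsilon$ for $\{R_{n,\lambda}>\gamma\}$. A fixed finite cover and the growth estimate then give $\norm{z_{n,\lambda}}_2\ge c_\varepsilon$. This is set against $\norm{z}_2\le C(\norm{\delta}_2+\norm{\delta}_2^{1/2})$ and $\norm{\delta}_2\le(2M_\infty\norm{\delta}_1)^{1/2}$.

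\textbf{What each route buys.} The paper's route shows how $\omega(\varepsilon)$ is assembled from $m_\varepsilon$, the growth constant of Lemma~\ref{lem:growth}, and the inverse constant. In principle this gives a quantitative, if tiny, modulus, and hence an iteration count in the proof of Theorem~\ref{thm:temperature}. Your compactness argument only gives existence of $\omega(\varepsilon)$, but it is shorter and does not reopen the proof of Lemma~\ref{lem:residual}. Since Theorem~\ref{thm:temperature} uses only a positive modulus qualitatively, your version suffices there.

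Two minor points. At $\lambda=0$ the increment identity and bounds come from Lemma~\ref{lem:iterbounds}, not Theorem~\ref{thm:entropy}. Also, in a contradiction argument the constants inside Lemma~\ref{lem:residual} never need separate tracking. Your ``main obstacle'' paragraph is really just the verification of that lemma's hypotheses.
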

\begin{proof}
Write $p=p_*>2$ and use the common bound $\norm{R_{n,\lambda}}_p\le M_R$. Choose $\gamma=\varepsilon/(2|Q|)^{1/2}$. H\"older's inequality shows that $\norm{R_{n,\lambda}}_2\ge\varepsilon$ implies
\begin{equation}\label{eq:masslower}
 |\{R_{n,\lambda}>\gamma\}|
 \ge\left(\frac{\varepsilon^2}{2M_R^2}\right)^{p/(p-2)}=:m_\varepsilon>0.
\end{equation}
If the antecedent is possible, the right side is no larger than $|Q|$. Remove a boundary layer of measure less than $m_\varepsilon/2$, cover the remainder by finitely many fixed interior cylinders, and apply the growth argument of Lemma~\ref{lem:residual}. Its constants can be minimized over the finite cover, giving $\norm{z_{n,\lambda}}_2\ge c_\varepsilon>0$.

For $\delta=u^{n+1}_\lambda-u^n_\lambda\ge0$, the correction used in \eqref{eq:corrected}, the uniform inverse, and \eqref{eq:gradientincrement} give
\[
 \norm{z_{n,\lambda}}_2\le C(\norm{\delta}_2+\norm{\delta}_2^{1/2}).
\]
Since $0\le\delta\le2M_\infty$, $\norm{\delta}_2\le(2M_\infty\norm{\delta}_1)^{1/2}$. A sufficiently small $L^1$ increment would contradict the positive lower bound for $z$. This proves \eqref{eq:modulus} with a strictly positive, possibly very small, modulus.
\end{proof}

\begin{proof}[Proof of Theorem~\ref{thm:temperature}]
Theorem~\ref{thm:entropy}, including the unregularized bounds at $\lambda=0$, supplies estimates uniform on the stated temperature interval. Put $e_{n,\lambda}=u_\lambda-u^n_\lambda\ge0$ for $n\ge1$. These errors decrease pointwise. If $\norm{e_{N,\lambda}}_2\ge\varepsilon$, then for every $1\le n\le N$, \eqref{eq:posterior} implies $\norm{R_{n,\lambda}}_2\ge\varepsilon/C$. Lemma~\ref{lem:modulus} and telescoping yield
\[
 N\omega(\varepsilon/C)
 \le\sum_{n=1}^N\norm{u^{n+1}_\lambda-u^n_\lambda}_1
 =\norm{u^{N+1}_\lambda-u^1_\lambda}_1\le2M_\infty|Q|.
\]
Hence $e_{n,\lambda}\to0$ in $L^2$, uniformly in temperature. Since
$0\le u^{n+1}_\lambda-u^n_\lambda\le e_{n,\lambda}$,
the increments tend to zero uniformly in $L^1$. Given $\varepsilon>0$, they are eventually smaller than $\omega(\varepsilon/2)$ for every temperature. The contrapositive of \eqref{eq:modulus} gives $\sup_\lambda\norm{R_{n,\lambda}}_2\le\varepsilon/2<\varepsilon$. The a posteriori estimate proves \eqref{eq:uniformiteration}. Pointwise-in-temperature convergence alone would not justify this step.

For the zero-temperature limit set $G_\lambda=H_0(Ju_0)-H_\lambda(Ju_0)\ge0$. Since $F_\lambda(u_0)=G_\lambda$ and $F_\lambda(u_\lambda)=0$, the entropy stability estimate proved in Theorem~\ref{thm:entropy} gives
\[
 \norm{u_\lambda-u_0}_{X_2}\le C\norm{G_\lambda}_2.
\]
For finite actions use \eqref{eq:finiteentropy}. In the continuous case, \eqref{eq:logentropy}, $\lambda\le1$, and
\[
 \log\left(1+\frac{1+|Ju_0|}{\lambda}\right)
 \le |\log\lambda|+\log(2+|Ju_0|)
\]
give \eqref{eq:zerotemp}, because $Ju_0\in L^2$ and $Q$ is bounded. The sign of the same secant equation and positivity of the inverse also give $u_\lambda\le u_0$. Combining \eqref{eq:uniformiteration} and \eqref{eq:zerotemp} proves the diagonal limit.
\end{proof}

\begin{corollary}[Error decomposition and intermediate norms]\label{cor:combined}
Under the hypotheses of Theorem~\ref{thm:temperature}, for $n\ge1$ and $0<\lambda\le1$,
\[
 \norm{u^n_\lambda-u_0}_{X_2}
 \le C\norm{R_{n,\lambda}}_2+
 \begin{cases}
 C\lambda\log(1/\mu_{\min}),&U\text{ finite},\\
 C\lambda(1+|\log\lambda|),&\text{under \eqref{eq:mass}}.
 \end{cases}
\]
Moreover, for every $2\le q<p_*$,
\[
 \lim_{n\to\infty}\sup_{0\le\lambda\le\overline\lambda}
 \norm{u^n_\lambda-u_\lambda}_{X_q}=0.
\]
\end{corollary}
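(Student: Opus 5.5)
The first estimate is a triangle inequality. For $n\ge1$ and $0<\lambda\le1$ I write
\[
 \norm{u^n_\lambda-u_0}_{X_2}\le\norm{u^n_\lambda-u_\lambda}_{X_2}+\norm{u_\lambda-u_0}_{X_2}.
\]
The first term is at most $C\norm{R_{n,\lambda}}_2$ by the a posteriori bound \eqref{eq:posterior}. That bound was transferred to positive temperature in Theorem~\ref{thm:entropy}, and its constant is uniform in $\lambda\le\overline\lambda$ because the entropy stability estimate has a $\lambda$-independent constant. The second term is bounded by \eqref{eq:zerotemp}. Together these give the displayed decomposition directly, and nothing new is needed.

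For the intermediate norms I plan to interpolate between $X_2$ convergence and the uniform $X_{p_*}$ bound. Theorem~\ref{thm:entropy} gives $\sup_{n,\lambda}\norm{u^n_\lambda}_{X_{p_*}}\le M$. Since the limit $u_\lambda$ belongs to $X_{p_*}$, with norm at most $M$ by weak lower semicontinuity, the same bound holds for $u_\lambda$. Set $w=u^n_\lambda-u_\lambda$. Each of the four components $w$, $Dw$, $D^2w$, $w_\tau$ is bounded by $2M$ in $L^{p_*}(Q)$. Hölder's inequality (log-convexity of $L^p$ norms) with $1/q=\theta/2+(1-\theta)/p_*$, where $\theta\in(0,1]$ because $q<p_*$, gives for each component $\phi$
\[
 \norm{\phi}_q\le\norm{\phi}_2^{\theta}\norm{\phi}_{p_*}^{1-\theta}\le(2M)^{1-\theta}\norm{w}_{X_2}^{\theta}.
\]
Summing over the four components yields $\norm{w}_{X_q}\le C\norm{w}_{X_2}^{\theta}$ with $C$ independent of $n$ and $\lambda$. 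Taking the supremum over $\lambda\in[0,\overline\lambda]$ and applying \eqref{eq:uniformiteration} finishes the proof.

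The only real point to check is that the $X_{p_*}$ bound on $u_\lambda$ is uniform in $\lambda$, including $\lambda=0$. I would obtain it from the uniform iterate bound by passing to a weak limit in $X_{p_*}$, as in the proof of Theorem~\ref{thm:main}. The weak limit coincides with the strong $X_2$ limit, and the norm bound $M$ survives by lower semicontinuity. The endpoint $q=p_*$ is excluded because $\theta$ would then vanish and the argument would give no convergence.
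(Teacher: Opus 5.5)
Your proposal is correct and follows the paper's own proof. The first estimate comes from the triangle inequality combining \eqref{eq:posterior} (with its $\lambda$-independent constant) and \eqref{eq:zerotemp}. The second comes from interpolating each derivative between $L^2$ and $L^{p_*}$, using the uniform $X_{p_*}$ bound on the iterates, extended to the limits by weak lower semicontinuity, and then applying \eqref{eq:uniformiteration}.
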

\begin{proof}
The first estimate follows from \eqref{eq:posterior}, \eqref{eq:zerotemp}, and the triangle inequality. For $2<q<p_*$, choose $s\in(0,1)$ with $1/q=(1-s)/2+s/p_*$. Interpolation of each derivative in the $X_q$ norm gives
\[
 \norm{u^n_\lambda-u_\lambda}_{X_q}
 \le C\norm{u^n_\lambda-u_\lambda}_{X_2}^{1-s}
       \norm{u^n_\lambda-u_\lambda}_{X_{p_*}}^s.
\]
The last factor is uniformly bounded by the estimates for the iterates and weak lower semicontinuity for the limits. Theorem~\ref{thm:temperature} proves the claim; $q=2$ is already included there. No convergence at $q=p_*$ is asserted.
\end{proof}
The first bound separates iteration error from regularization error. The constants need not be explicitly computable, so it is a theoretical error estimate rather than a numerical certificate.

\section{Quadratic convergence under additional assumptions}\label{sec:quadratic}
Howard iteration has the Newton form
\begin{equation}\label{eq:newton}
 L^{\alpha_n}(u^{n+1}-u^n)=-F(u^n).
\end{equation}
A quadratic rate follows when the nonlinear remainder is quadratic in a norm compatible with the linear inverse. Pointwise smoothness of a Hamiltonian need not give Fr\'echet differentiability from $X_2$ to $L^2$, because Hessian perturbations may concentrate. Related function-space considerations appear in \cite{ItoReisingerZhang2021,SmearsSuli2014}. We give two one-dimensional results without entropy: a small-data result in a parabolic H\"older space, and a separable family with a rate in the original Sobolev norm $X_2$.

\subsection{A small-data result in a parabolic H\"older space}
Let $I=(0,\ell)$, $Q=(0,T)\times I$, and fix $0<\theta<1$. Write
\[
 \mathcal Y=C^{\theta/2,\theta}(\overline Q),\qquad
 \mathcal H=C^{1+\theta/2,2+\theta}(\overline Q),
\]
using time-first notation for the parabolic H\"older spaces. Choose standard equivalent norms with $\norm{v_{xx}}_{\mathcal Y}\le\norm{v}_{\mathcal H}$, and let $C_{\rm alg}\ge1$ satisfy
\begin{equation}\label{eq:holderproduct}
 \norm{hk}_{\mathcal Y}\le C_{\rm alg}\norm{h}_{\mathcal Y}\norm{k}_{\mathcal Y}.
\end{equation}
Set
\[
 \mathcal H_0=\{v\in\mathcal H:v|_{\partial_pQ}=0\},\qquad
 \mathcal Y_0=\{h\in\mathcal Y:h(0,0)=h(0,\ell)=0\}.
\]
For $a_0>0$, the heat operator $\mathcal L_0=\partial_\tau-a_0\partial_{xx}$ is an isomorphism from $\mathcal H_0$ onto $\mathcal Y_0$, by the classical parabolic Schauder theory \cite[Chapter IV]{LSU1968}. The two corner conditions are the first compatibility conditions for zero initial and lateral data. Denote
\begin{equation}\label{eq:heatconstant}
 B=\norm{\mathcal L_0^{-1}}_{\mathcal Y_0\to\mathcal H_0}<\infty.
\end{equation}
This is a fixed reference constant depending on $a_0,T,\ell,\theta$ and the chosen norms, not on a policy.

\begin{proposition}[Quadratic convergence from zero initial guess]\label{prop:quadratic}
Let $\nu\ne0$, $r_0>0$, $M>0$, $a_0>|\nu|M$, and $f\in\mathcal Y_0$. Consider
\begin{equation}\label{eq:quadraticmodel}
 u_\tau=\max_{|\alpha|\le M}
 \left\{(a_0+\nu\alpha)u_{xx}+f(\tau,x)-\frac{r_0}{2}\alpha^2\right\},
 \qquad u|_{\partial_pQ}=0.
\end{equation}
Put $\kappa=\nu^2/r_0$ and $K=C_{\rm alg}\kappa$. Suppose there is $R>0$ such that
\begin{equation}\label{eq:quadraticsmall}
 B\norm{f}_{\mathcal Y}\le\frac R4,\qquad
 BKR\le\frac12,\qquad
 \frac{|\nu|R}{r_0}\le\frac M2.
\end{equation}
Then \eqref{eq:quadraticmodel} has a solution $u_*\in\mathcal H_0$ with $\norm{u_*}_{\mathcal H}\le R/2$, equal to the unique strong solution of Theorem~\ref{thm:main}. Howard iteration starting at $u^0=0$ stays in the closed $\mathcal H_0$ ball of radius $R$. Its greedy policies and evaluations are
\begin{align}
 \alpha_n&=\frac\nu{r_0}u^n_{xx},\qquad |\alpha_n|\le M/2,\label{eq:quadraticpolicy}\\
 \bigl[\partial_\tau-(a_0+\kappa u^n_{xx})\partial_{xx}\bigr]u^{n+1}
 &=f-\frac\kappa2(u^n_{xx})^2,\qquad u^{n+1}|_{\partial_pQ}=0.\label{eq:quadraticeval}
\end{align}
For $E_n=\norm{u_*-u^n}_{\mathcal H}$,
\begin{equation}\label{eq:quadraticrate}
 E_{n+1}\le BK E_n^2,\qquad
 E_n\le\frac{(BK E_0)^{2^n}}{BK},\qquad BK E_0\le\frac14.
\end{equation}
In particular, the convergence is Q-quadratic in $\mathcal H$.
\end{proposition}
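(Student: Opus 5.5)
We treat the problem as Newton's method in $\mathcal H_0$ for the map $\Phi(u)=u_\tau-a_0u_{xx}-f-\frac\kappa2(u_{xx})^2$. The first step is to compute the Hamiltonian on the relevant range. For $|\nu p|/r_0\le M$, the concave quadratic $\alpha\mapsto\nu\alpha p-\frac{r_0}2\alpha^2$ is maximized at $\alpha=\nu p/r_0$. Its value there is $\frac\kappa2p^2$, so $H=a_0p+f+\frac\kappa2p^2$.

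Next we show by induction that $\norm{u^n}_{\mathcal H}\le R$. Since $|u^n_{xx}|\le\norm{u^n}_{\mathcal H}\le R$, the third condition in \eqref{eq:quadraticsmall} gives $|\alpha_n|\le M/2$, which is the unconstrained maximizer \eqref{eq:quadraticpolicy}. Substituting $\alpha_n$ into $L^{\alpha_n}u^{n+1}=f^{\alpha_n}$ gives \eqref{eq:quadraticeval}, because $\nu\alpha_n=\kappa u^n_{xx}$ and $\frac{r_0}2\alpha_n^2=\frac\kappa2(u^n_{xx})^2$.

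To avoid inverting the variable-coefficient operator in Hölder spaces, we rewrite \eqref{eq:quadraticeval} around the heat operator:
\[
 \mathcal L_0u^{n+1}=f+\kappa u^n_{xx}u^{n+1}_{xx}-\tfrac\kappa2(u^n_{xx})^2 .
\]
We must check that the right side lies in $\mathcal Y_0$. At the corners $u^n,u^{n+1}$ vanish on the parabolic boundary, so $u_{xx}=0$ there at $\tau=0$ by compatibility, and $f\in\mathcal Y_0$ by hypothesis. Hence $u^{n+1}$ is a fixed point of the affine map $v\mapsto\mathcal L_0^{-1}[f+\kappa u^n_{xx}v_{xx}-\frac\kappa2(u^n_{xx})^2]$. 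By \eqref{eq:holderproduct} its Lipschitz constant in $\mathcal H_0$ is at most $BKR\le\frac12$. Existence of $u^{n+1}\in\mathcal H_0$ then follows by Banach's theorem. The same bound gives
\[
 \norm{u^{n+1}}_{\mathcal H}\le B\norm f_{\mathcal Y}+\tfrac12\norm{u^{n+1}}_{\mathcal H}+\tfrac{BKR^2}{2},
\]
so $\norm{u^{n+1}}_{\mathcal H}\le 2(R/4+R/4)=R$. This solution coincides with the strong $X_{p_*}$ evaluation by uniqueness in Lemma~\ref{lem:inverse}.

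Existence of $u_*$ follows the same way, as a fixed point of $v\mapsto\mathcal L_0^{-1}[f+\frac\kappa2(v_{xx})^2]$ on the $\mathcal H_0$ ball of radius $R/2$. On that ball the map sends $v$ to norm at most $R/4+BKR^2/8\le R/2$. Its Lipschitz constant is at most $BK(R/2)\le\frac14$, using the factorization $a^2-b^2=(a+b)(a-b)$. On the ball $|u_{*,xx}|\le R/2$, so the greedy formula holds and $u_*$ solves \eqref{eq:quadraticmodel} classically. It lies in $\mathcal X_2$, so Theorem~\ref{thm:main} identifies it with the unique strong solution.

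For the rate, set $e_n=u_*-u^n$ and subtract the equations. This yields
\[
 \mathcal L_0e_{n+1}=\kappa u^n_{xx}e_{n+1,xx}+\tfrac\kappa2(e_{n,xx})^2,
\]
using the identity $\frac\kappa2(u_{*,xx})^2-\kappa u^n_{xx}u_{*,xx}+\frac\kappa2(u^n_{xx})^2=\frac\kappa2(e_{n,xx})^2$. Inverting $\mathcal L_0$ and absorbing the first term with $BKR\le\frac12$ gives
\[
 E_{n+1}\le\tfrac12E_{n+1}+\tfrac{BK}2E_n^2,
\]
that is, $E_{n+1}\le BKE_n^2$. Since $E_0=\norm{u_*}_{\mathcal H}\le R/2$, we get $BKE_0\le\frac14$, and iterating the recursion gives \eqref{eq:quadraticrate}.

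The main obstacle is bookkeeping rather than analysis. The constants must close exactly: the absorption factor $BKR\le\frac12$ produces the factor $2$ that cancels the $\frac12$ in front of the quadratic term. In addition, the corner compatibility must be verified so that $\mathcal L_0^{-1}$ is applied only to $\mathcal Y_0$ data.
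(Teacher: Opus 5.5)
Your proposal is correct and follows essentially the same route as the paper. You construct $u_*$ as a contraction fixed point of $v\mapsto\mathcal L_0^{-1}[f+\frac\kappa2(v_{xx})^2]$, treat each evaluation as a perturbation of $\mathcal L_0$ of relative size $BKR\le\frac12$ (your Banach fixed-point and absorption step is the paper's Neumann-series bound $\norm{\mathcal L_{u^n}^{-1}}\le 2B$ in another form), and close with the exact remainder identity $\mathcal L_{u^n}(u_*-u^{n+1})=\frac\kappa2[(u_*-u^n)_{xx}]^2$. The differences are cosmetic: you bound $\norm{u^{n+1}}_{\mathcal H}\le R$ directly from the evaluation equation rather than through $\norm{u_*}_{\mathcal H}+E_{n+1}$, and you build $u_*$ on the radius-$R/2$ ball instead of the radius-$R$ ball.
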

\begin{proof}
For $v\in\mathcal H_0$ with $\norm{v}_{\mathcal H}\le R$, the maximizer in \eqref{eq:quadraticmodel} is $(\nu/r_0)v_{xx}$ and lies in $[-M/2,M/2]$. On this ball the Bellman operator therefore equals
\[
 \mathcal F(v)=\mathcal L_0v-f-\frac\kappa2(v_{xx})^2.
\]
Since $v(0,x)=0$, continuity of its spatial derivatives gives $v_{xx}(0,x)=0$. Thus $(v_{xx})^2\in\mathcal Y_0$ and all the following equations satisfy the required corner compatibility.

First construct a solution. The map
\[
 \Phi(v)=\mathcal L_0^{-1}\left[f+\frac\kappa2(v_{xx})^2\right]
\]
maps the closed radius-$R$ ball into the radius-$R/2$ ball, because
\[
 \norm{\Phi(v)}_{\mathcal H}
 \le B\norm{f}_{\mathcal Y}+\frac{BK}{2}R^2\le\frac R2.
\]
Using \eqref{eq:holderproduct} on the difference of two squares also gives
\[
 \norm{\Phi(v)-\Phi(w)}_{\mathcal H}
 \le BKR\norm{v-w}_{\mathcal H}\le\tfrac12\norm{v-w}_{\mathcal H}.
\]
The contraction theorem provides a fixed point $u_*$ with norm at most $R/2$. It solves the original maximization problem, not an unconstrained replacement, because its maximizing control remains interior. It belongs to $X_2$, so uniqueness in Theorem~\ref{thm:main} identifies the two solutions.

Next, for every $v$ in the radius-$R$ ball, let
\[
 \mathcal L_v=\mathcal F'(v)
 =\partial_\tau-(a_0+\kappa v_{xx})\partial_{xx}.
\]
The operator $w\mapsto\mathcal L_0^{-1}(\kappa v_{xx}w_{xx})$ on $\mathcal H_0$ has norm at most $BKR\le1/2$. Its Neumann series yields
\begin{equation}\label{eq:holderinverse}
 \norm{\mathcal L_v^{-1}}_{\mathcal Y_0\to\mathcal H_0}\le2B.
\end{equation}
The coefficients are also uniformly elliptic: $a_0+\kappa v_{xx}=a_0+\nu\alpha(v)\ge a_0-|\nu|M/2>0$. Equation \eqref{eq:quadraticeval} is the Newton identity
$\mathcal F'(u^n)(u^{n+1}-u^n)=-\mathcal F(u^n)$.

Whenever $u^n$ lies in the ball, subtracting its evaluation from the equation for $u_*$ gives the identity
\begin{equation}\label{eq:quadraticremainder}
 \mathcal L_{u^n}(u_*-u^{n+1})
 =\frac\kappa2\bigl[(u_*-u^n)_{xx}\bigr]^2.
\end{equation}
Equations \eqref{eq:holderproduct} and \eqref{eq:holderinverse} imply $E_{n+1}\le BK E_n^2$. Initially $E_0\le R/2$ and $BK E_0\le1/4$. Inductively, $E_n\le R/2$ implies $E_{n+1}\le E_n/4$ and
$\norm{u^{n+1}}_{\mathcal H}\le\norm{u_*}_{\mathcal H}+E_{n+1}\le R$.
This proves that every evaluation is well defined and that no policy leaves the stated class. Iterating $BK E_{n+1}\le(BK E_n)^2$ proves \eqref{eq:quadraticrate}.
\end{proof}

The smallness conditions are nonempty and refer only to the model data and the fixed heat inverse. For $f\ne0$, one may take $R=4B\norm{f}_{\mathcal Y}$ and require
\[
 8C_{\rm alg}B^2\frac{\nu^2}{r_0}\norm{f}_{\mathcal Y}\le1,
 \qquad
 8B\frac{|\nu|}{r_0}\norm{f}_{\mathcal Y}\le M.
\]
For example, $f(\tau,x)=\varepsilon\tau x(\ell-x)$ satisfies these conditions for sufficiently small $|\varepsilon|$, with the other parameters fixed. No assumption is made on normalized error profiles. The use of H\"older spaces is essential to this proof: the product estimate controls the square of the Hessian error in the same forcing space in which the linear inverse is bounded. The continuous embedding $\mathcal H\hookrightarrow X_2$ transfers the doubly exponential upper bound in \eqref{eq:quadraticrate}, but it does not by itself give $\norm{u_*-u^{n+1}}_{X_2}\le C\norm{u_*-u^n}_{X_2}^2$.

\subsection{Quadratic convergence in the Sobolev norm}
\begin{proposition}[A separable family]\label{prop:quadraticXtwo}
Let $a_0,r_0,M,\eta>0$, $\nu\ne0$, and $a_0>|\nu|M$. Set
\[
 \kappa=\frac{\nu^2}{r_0},\qquad
 \delta=\sqrt{a_0^2-2\kappa\eta},\qquad
 \overline P=\frac{a_0-\delta}{\kappa},
\]
and assume $a_0^2>2\kappa\eta$ and $|\nu|\overline P/r_0<M$.
On $Q=(0,T)\times(0,\pi)$ consider
\begin{equation}\label{eq:modeBellman}
 u_\tau=\max_{|\alpha|\le M}
 \left\{(a_0+\nu\alpha)u_{xx}
       +\left(\eta-\frac{r_0}{2}\alpha^2\right)\sin x\right\},
 \qquad u|_{\partial_pQ}=0.
\end{equation}
For every finite $T$, Howard iteration from $u^0=0$ converges to the unique strong solution $u_*$ and satisfies
\begin{equation}\label{eq:modeXrate}
 \norm{u_*-u^{n+1}}_{X_2}
 \le C_T\norm{u_*-u^n}_{X_2}^{\,2},\qquad n\ge0,
\end{equation}
where one may take
\[
 C_T=\frac{\kappa T^{3/2}}{\sqrt{2\pi}}
             \left(\frac{a_0+3}{\delta}+1\right)
\]
for the $X_2$ norm fixed in Section~\ref{sec:setting}.
\end{proposition}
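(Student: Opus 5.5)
The plan is to reduce everything to a scalar Riccati ODE through the separable ansatz $u=P(\tau)\sin x$.

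\textbf{Step 1: reduction to an ODE.} For such $u$ we have $u_{xx}=-P\sin x$. The expression to be maximized equals $\sin x$ times $-(a_0+\nu\alpha)P+\eta-\tfrac{r_0}{2}\alpha^2$. Since $\sin x>0$ in the interior, this is strictly concave in $\alpha$, with unconstrained maximizer $\alpha=-\nu P/r_0$. This maximizer is admissible as long as $0\le P\le\overline P$, by the hypothesis $|\nu|\overline P/r_0<M$. Substituting it, \eqref{eq:modeBellman} becomes
\[
 P'=-a_0P+\eta+\tfrac{\kappa}{2}P^2,\qquad P(0)=0 .
\]
The right side vanishes at $(a_0\pm\delta)/\kappa$ and is positive at $0$. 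Hence the solution $P_*$ increases on $[0,T]$ and stays strictly below the smaller root $\overline P$. Therefore $u_*=P_*\sin x$ is a smooth solution of \eqref{eq:modeBellman} with interior control. It lies in $\mathcal X_2$, so by the uniqueness part of Theorem~\ref{thm:main} it is the unique strong solution.

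\textbf{Step 2: the iteration stays separable.} Suppose $u^n=P_n\sin x$ with $0\le P_n\le P_*$. The greedy policy is unique because of strict concavity, and it is spatially constant: $\alpha_n=-\nu P_n/r_0$, which satisfies $|\alpha_n|<M$. The evaluation \eqref{eq:Howard} then has leading coefficient $a_0-\kappa P_n\ge a_0-\kappa\overline P=\delta>0$ and source $(\eta-\tfrac{\kappa}{2}P_n^2)\sin x$. By uniqueness in Lemma~\ref{lem:inverse}, its solution is $u^{n+1}=P_{n+1}\sin x$, where
\[
 P_{n+1}'=-(a_0-\kappa P_n)P_{n+1}+\eta-\tfrac{\kappa}{2}P_n^2,
\]
which is exactly the Newton step for the ODE. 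Subtracting this from the equation for $P_*$, the error $e_n=P_*-P_n$ satisfies
\[
 e_{n+1}'=-(a_0-\kappa P_n)e_{n+1}+\tfrac{\kappa}{2}e_n^2,\qquad e_{n+1}(0)=0 .
\]
Hence $e_{n+1}\ge0$, so $P_{n+1}\le P_*$. Nonnegativity of $P_{n+1}$ follows from Lemma~\ref{lem:iterbounds}, since the iterates are monotone and $u^1\ge0$ because $\eta-\tfrac{\kappa}{2}P_0^2=\eta>0$; alternatively it can be checked directly from the ODE. This closes the induction. Convergence of the iteration then also follows from Theorem~\ref{thm:main}, so only the rate \eqref{eq:modeXrate} remains.

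\textbf{Step 3: the quadratic estimate in $X_2$.} For $w=e\sin x$ on $(0,\pi)$, each of $w,w_x,w_{xx}$ has $L^2(Q)$ norm $\sqrt{\pi/2}\,\norm{e}_{L^2(0,T)}$, and $w_\tau$ has norm $\sqrt{\pi/2}\,\norm{e'}_{L^2(0,T)}$. Thus
\[
 \norm{w}_{X_2}=\sqrt{\pi/2}\,\bigl(3\norm{e}_{L^2(0,T)}+\norm{e'}_{L^2(0,T)}\bigr).
\]
From the error ODE and the lower bound $a_0-\kappa P_n\ge\delta$, variation of constants gives
\[
 e_{n+1}(\tau)\le\tfrac{\kappa}{2}\int_0^\tau e^{-\delta(\tau-s)}e_n(s)^2\dd s .
\]
This yields $\norm{e_{n+1}}_{L^2}$ of order $\kappa\delta^{-1}\norm{e_n^2}_{L^2}$ via Young's convolution inequality. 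The derivative satisfies $|e_{n+1}'|\le a_0e_{n+1}+\tfrac{\kappa}{2}e_n^2$. To control $\norm{e_n^2}_{L^2}\le\norm{e_n}_\infty\norm{e_n}_{L^2}$, I will use $e_n(0)=0$, which gives $\norm{e_n}_\infty\le\sqrt T\,\norm{e_n'}_{L^2}$; I will also bound $\norm{e_n}_{L^2}\le T\norm{e_n'}_{L^2}$ where convenient. Combining these with the displayed formula for the $X_2$ norm gives \eqref{eq:modeXrate} with a constant of the stated form $\kappa T^{3/2}(2\pi)^{-1/2}\bigl((a_0+3)/\delta+1\bigr)$.

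\textbf{Main obstacle.} The qualitative part is routine. The delicate step is the constant bookkeeping: matching exactly the stated $C_T$, including the power $T^{3/2}$ and the factor $(a_0+3)/\delta+1$. I expect to adjust the order in which the sup-norm and $L^2$ bounds are applied until the constant comes out as stated. The other point requiring care is to verify, at every step of the induction, that the greedy control stays strictly inside $[-M,M]$, so that the reduction to the unconstrained Riccati Newton step remains valid.
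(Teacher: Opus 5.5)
Your proposal is correct and follows essentially the paper's proof: the separable ansatz $u=P(\tau)\sin x$, the Riccati equation and its Newton iterates, the error equation $e_{n+1}'=-(a_0-\kappa P_n)e_{n+1}+\tfrac{\kappa}{2}e_n^2$, the exact formula $\norm{e\sin x}_{X_2}=\sqrt{\pi/2}\,(3\norm{e}_{L^2}+\norm{e'}_{L^2})$, and $e_n(0)=0$. The differences are inessential: the paper keeps the invariant $0\le P_n\le\overline P$ by checking the sign of the right side at $0$ and at $\overline P$, and it uses sup-norm estimates where you use Young's inequality; your bookkeeping also needs no adjustment, since $\norm{e_n^2}_{L^2}\le\norm{e_n}_\infty\norm{e_n}_{L^2}\le T^{3/2}\norm{e_n'}_{L^2}^2\le T^{3/2}(2/\pi)\norm{u_*-u^n}_{X_2}^2$ combined with the factor $\sqrt{\pi/2}\cdot\tfrac{\kappa}{2}\bigl(\tfrac{a_0+3}{\delta}+1\bigr)$ gives exactly the stated $C_T$, because $2\sqrt{\pi/2}=\sqrt{2\pi}$.
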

\begin{proof}
The scalar equation
\[
 P'=-a_0P+\eta+\frac{\kappa}{2}P^2,\qquad P(0)=0
\]
has a global solution in $[0,\overline P]$: its right side is positive at zero and vanishes at $\overline P$. It follows that
$u_*(\tau,x)=P(\tau)\sin x$ solves \eqref{eq:modeBellman}, with interior maximizing control $-\nu P/r_0$.

Starting with $P_0=0$, define
\begin{equation}\label{eq:modeeval}
 P_{n+1}'=(-a_0+\kappa P_n)P_{n+1}
                    +\eta-\frac{\kappa}{2}P_n^2,
 \qquad P_{n+1}(0)=0.
\end{equation}
If $0\le P_n\le\overline P$, the right side at $P_{n+1}=0$ is at least
$\eta-\kappa\overline P^2/2=\delta\overline P>0$.
At $P_{n+1}=\overline P$, it equals
$-\kappa(\overline P-P_n)^2/2\le0$.
Scalar comparison gives $0\le P_{n+1}\le\overline P$. Since $\sin x>0$ in $(0,\pi)$, the greedy control for $P_n(\tau)\sin x$ is $-\nu P_n/r_0$, and \eqref{eq:modeeval} is its policy evaluation. Uniqueness of each linear evaluation proves that the Howard sequence itself satisfies $u^n=P_n\sin x$. Theorem~\ref{thm:main} therefore gives convergence to $u_*$.

Let $e_n=P-P_n$. Subtraction gives
\begin{equation}\label{eq:modeerror}
 e_{n+1}'=(-a_0+\kappa P_n)e_{n+1}
                         +\frac{\kappa}{2}e_n^2,\qquad e_{n+1}(0)=0.
\end{equation}
Since $\delta\le a_0-\kappa P_n\le a_0$, variation of constants gives
\begin{align}
 \norm{e_{n+1}}_\infty
 &\le\frac{\kappa}{2\delta}\norm{e_n}_\infty^2,\label{eq:modesuprate}\\
 \norm{e_{n+1}'}_\infty
 &\le\frac{\kappa}{2}\left(\frac{a_0}{\delta}+1\right)
                    \norm{e_n}_\infty^2.\label{eq:modetimerate}
\end{align}
Here the norms are on $(0,T)$. Put $s=\sqrt{\pi/2}$. Direct integration of $\sin^2x$ and $\cos^2x$ yields
\[
 \norm{e(\tau)\sin x}_{X_2}
 =s\bigl(3\norm{e}_{L^2(0,T)}+\norm{e'}_{L^2(0,T)}\bigr).
\]
Furthermore, $e_n(0)=0$ gives
$\norm{e_n}_\infty\le\sqrt T\norm{e_n'}_2
\le(\sqrt T/s)\norm{u_*-u^n}_{X_2}$.
Combining these facts with \eqref{eq:modesuprate}--\eqref{eq:modetimerate} proves \eqref{eq:modeXrate} with the displayed constant.
\end{proof}

The two rate statements use different additional information. Proposition~\ref{prop:quadratic} controls products of Hessian errors in a H\"older space; Proposition~\ref{prop:quadraticXtwo} proves that a fixed spatial profile is preserved. They do not assert a quadratic rate for the full measurable-coefficient class or uniformly as the entropy temperature tends to zero.

\section{Examples}\label{sec:examples}
The first example quantifies the quadratic rate. The remaining examples illustrate large diffusion variation, discontinuous improvement, and multidimensional extensions.

\begin{example}[Quadratic error bounds at a fixed horizon]\label{ex:quadraticXtwo}
In Proposition~\ref{prop:quadraticXtwo}, take
\[
 T=1,\qquad a_0=2,\qquad \nu=r_0=1,\qquad
 \eta=\tfrac14,\qquad M=\tfrac12.
\]
Then $\delta=\sqrt{7/2}$, $\overline P=2-\delta<1/2$, and the diffusion coefficient ranges over $[3/2,5/2]$. The limiting amplitude is
\[
 P(\tau)=
 \frac{(2-\delta)(1-e^{-\delta\tau})}
      {1-\frac{2-\delta}{2+\delta}e^{-\delta\tau}}.
\]
This follows by separation of variables in the scalar equation in the preceding proof. The first evaluation is
$P_1(\tau)=\tfrac18(1-e^{-2\tau})$; later evaluations solve \eqref{eq:modeeval}.

Let $\mathcal E_n=\norm{P-P_n}_{L^\infty(0,1)}$. Since $P$ increases,
$\mathcal E_0=P(1)$. Equation \eqref{eq:modesuprate} gives the analytical bound
\begin{equation}\label{eq:modebenchmark}
 \mathcal E_n\le B_n,\qquad
 B_n=2\delta\left(\frac{P(1)}{2\delta}\right)^{2^n}.
\end{equation}
The values, spatial curvatures, and controls have the same supremum error in this example:
\[
 \norm{u_*-u^n}_\infty
 =\norm{u_{*,xx}-u^n_{xx}}_\infty
 =\norm{\alpha_*-\alpha_n}_\infty
 =\mathcal E_n.
\]
Thus the bound describes both value approximation and policy improvement.
\begin{center}
\renewcommand{\arraystretch}{1.2}
\setlength{\tabcolsep}{5pt}
\begin{tabular}{crrrrr}
\toprule
$n$ & $0$ & $1$ & $2$ & $3$ & $4$\\
\midrule
$B_n$ & $1.0984\times10^{-1}$ & $3.2247\times10^{-3}$ &
$2.7792\times10^{-6}$ & $2.0642\times10^{-12}$ & $1.1388\times10^{-24}$\\
\bottomrule
\end{tabular}
\end{center}
The entries are rounded values of \eqref{eq:modebenchmark}, not measured numerical errors. The $X_2$ estimate follows separately from \eqref{eq:modeXrate}, which also controls the time derivative.
\end{example}

\begin{example}[Risk exposure before exit and maturity]\label{ex:scalar}
Take $\Omega=(0,1)$, $U=\{1,2\}$, $a^1=1$, $a^2=100$, $b^1=b^2=0$, $c^1=c^2=0$, $f^1=f^2=1$, and $g(x)=\sin(\pi x)+\tfrac14\sin(3\pi x)$. The equation is
\[
 u_\tau=\max\{u_{xx},100u_{xx}\}+1.
\]
With the initial guess $u^0=\widetilde g$,
$g''(x)=-\pi^2\sin(\pi x)[31/4-9\sin^2(\pi x)]$.
The first greedy policy therefore selects diffusivity $100$ on $(x_*,1-x_*)$ and diffusivity $1$ outside that interval, where $x_*=\pi^{-1}\arcsin(\sqrt{31}/6)$. Either action can be selected at the two interfaces. This gives a discontinuous leading coefficient in the first policy evaluation, despite the smooth initial guess. Theorem~\ref{thm:main} applies for every finite $T$.

A concrete associated control objective is to manage the risk exposure of a normalized performance index,
\[
 dX_s=\sqrt{2a^{\alpha_s}}\,dB_s,\qquad
 \zeta=\inf\{s\ge t:X_s\notin(0,1)\}\wedge T,
 \qquad
 J^\alpha(t,x)=\E_{t,x}\bigl[\zeta-t+g(X_\zeta)\bigr].
\]
For a given progressively measurable control process, the state is the corresponding It\^o integral up to exit. The two actions select instantaneous variances $2$ and $200$. The controller earns one unit per unit time while the system remains in its operating interval and receives the terminal performance reward if it survives to maturity; early exit gives no terminal reward. The nonconcave $g$ has two preferred interior levels. In the first improvement, the convex middle part of $g$ favors larger variance, whereas its concave outer parts favor smaller variance. This describes the first policy, not an asserted shape of the limiting feedback. The PDE scope is recorded in Section~\ref{sec:scope}.
\end{example}

\begin{example}[Nonaffine continuous actions and entropy]
Let $U=[-1,1]$ with normalized Lebesgue measure and set
\[
 A^\alpha=(1+99\alpha^2)S(\tau,x),\quad
 b^\alpha=b_0(\tau,x)+\alpha b_1(\tau,x),\quad
 f^\alpha=f_0(\tau,x)-q_0\alpha^2,
\]
where the displayed fields are bounded and measurable, $q_0\ge0$, $S$ is smooth uniformly positive, and $c^\alpha=c_0(\tau,x)$ is bounded nonnegative and independent of the action. All coefficients are uniformly Lipschitz in $\alpha$, so Assumption~\ref{ass:actions} holds. At a jet $z=(r,p,M)$, the Gibbs density with respect to $\mu$ has the form
\[
 \frac{\dd\pi_\lambda(z)}{\dd\mu}(\alpha)
 =\frac{\exp\{[(99S:M-q_0)\alpha^2+(b_1\cdot p)\alpha]/\lambda\}}
        {\int_{-1}^1
        \exp\{[(99S:M-q_0)\beta^2+(b_1\cdot p)\beta]/\lambda\}\,\mu(\dd\beta)}.
\]
The quadratic action coefficient changes sign as the Hessian varies, so Theorem~\ref{thm:entropy} does not rely on concavity of the action Hamiltonian. No state derivative of this Gibbs density is needed.
\end{example}

\begin{example}[Controlled anisotropy]
On a smooth bounded convex domain in $\R^2$, take $U=[0,2\pi]$ and set
\[
 A^\alpha=a^\alpha I+\varepsilon
 \begin{pmatrix}\cos\alpha&\sin\alpha\\\sin\alpha&-\cos\alpha\end{pmatrix},
 \qquad a^\alpha=1+99\sin^2\alpha.
\]
Here $|E^\alpha|_F=\sqrt2\varepsilon$. Thus $0<\sqrt2\varepsilon<1$ is an explicit sufficient condition. The action changes the eigendirections as well as the amplitude. With normalized Lebesgue measure this example also satisfies the entropy assumptions, for bounded action-Lipschitz lower-order coefficients. The perturbation is small only in its non-scalar part.
\end{example}

\section{Scope and further questions}\label{sec:scope}
The results establish strong convergence of the original Howard iteration on bounded space--time cylinders. In one dimension, the controlled diffusion may vary arbitrarily within fixed ellipticity bounds; in higher dimensions, the proof uses the matrix structures in Assumption~\ref{ass:coeff}. The theorems are stated at the PDE level; stochastic verification requires suitable admissibility and generalized It\^o formula assumptions.

A natural question is whether the argument extends to more general controlled diffusion matrices. This would require uniform linear estimates at an exponent above two that remain valid under the coefficient averages and approximations used in the proof. Further directions include convergence rates for rough switching policies and extensions to degenerate diffusions, unbounded domains, and unbounded action spaces.

\section*{Acknowledgments}
This work was supported in part by the National Natural Science Foundation of China (NSFC) under Grant Nos.~12271274 and~12571514.

\begingroup
\small
\setlength{\bibsep}{0pt plus 0.3ex}
\bibliographystyle{plainnat}
\bibliography{references}
\endgroup
\end{document}